\title[QuickSort density tails]
{On the tails of the limiting {\tt QuickSort} density}
\newcommand\urladdrx[1]{{\urladdr{\def~{{\tiny$\sim$}}#1}}}
\author{James Allen Fill}
\address{Department of Applied Mathematics and Statistics,
The Johns Hopkins University,
3400 N.~Charles Street,
Baltimore, MD 21218-2682 USA}
\email{jimfill@jhu.edu}
\thanks{Research of both authors supported by 
the Acheson~J.~Duncan Fund for the Advancement of Research in
Statistics.}
\author{Wei-Chun Hung}
\address{Department of Applied Mathematics and Statistics,
The Johns Hopkins University,
3400 N.~Charles Street,
Baltimore, MD 21218-2682 USA}
\email{whung6@jhu.edu}
\subjclass[2010]{Primary: 68P10; Secondary: 60E05, 60C05} 
\numberwithin{equation}{section}
\theoremstyle{plain}
\newtheorem{theorem}{Theorem}[section]
\newtheorem{lemma}[theorem]{Lemma}
\newtheorem{proposition}[theorem]{Proposition}
\theoremstyle{definition}
\newtheorem{remark}[theorem]{Remark}
\theoremstyle{remark}
\newenvironment{romenumerate}[1][-10pt]{
\addtolength{\leftmargini}{#1}\begin{enumerate}
 }{\end{enumerate}}
\newcounter{oldenumi}
{\setcounter{oldenumi}{\value{enumi}}
\begin{romenumerate} \setcounter{enumi}{\value{oldenumi}}}
{\end{romenumerate}}
\newcounter{thmenumerate}
\newcounter{xenumerate}   
\newcommand{\refT}[1]{Theorem~\ref{#1}}
\newcommand{\refL}[1]{Lemma~\ref{#1}}
\newcommand{\refR}[1]{Remark~\ref{#1}}
\newcommand{\refS}[1]{Section~\ref{#1}}
\newcommand\marginal[1]{\marginpar{\raggedright\parindent=0pt\tiny #1}}
\newcommand\REM[1]{{\raggedright\texttt{[#1]}\par\marginal{XXX}}}
\xdef\klockan{\the\count1.0\the\count255}
\xdef\klockan{\the\count1.\the\count255}\fi
\def\rompar(#1){\textup(#1\textup)}    
\def\xexp(#1){e^{#1}}
\newcommand\bbR{\mathbb R}
\newcounter{CC}
\newcounter{cc}
\newcommand\Var{\operatorname{Var}}
\newcommand\gam{\gamma}
\newcommand\Fbar{\overline F}
\newcommand\Fu{\underbar{$F$}}
\newcommand\doi{D_{01}}
\newcommand{\ignore}[1]{}
\newcommand{\ro}[1]{\uppercase\expandafter{\romannumeral #1}}
\begin{document}

\maketitle

\vspace{-.3in}
\begin{center}
July~30, 2018
\end{center}
\vspace{.1in}

\begin{abstract}
We give upper and lower asymptotic bounds for the left tail and for the right tail of the continuous limiting {\tt QuickSort} density~$f$ that are nearly matching in each tail. The bounds strengthen results from a paper of Svante Janson (2015) concerning the corresponding distribution function~$F$.  Furthermore, we obtain similar 
bounds on absolute values of derivatives of~$f$ of each order.
\end{abstract}

\section{Introduction}
Let $X_{n}$ denote the (random) number of comparisons when sorting~$n$ distinct numbers using the algorithm {\tt QuickSort}. Clearly $X_0 = 0$, and for $n \geq 1$ we have the recurrence relation
\[
X_{n} \overset{\mathcal{L}}{=} X_{U_{n} - 1} + X^{*}_{n-U_{n}} + n - 1,
\]
where $\overset{\mathcal{L}}{=}$ denotes equality in law (i.e.,\ in distribution); 
$X_{k} \overset{\mathcal{L}}{=} X^{*}_{k}$; the random variable $U_{n}$ is uniformly distributed on 
$\{1,\dots,n\}$; and $U_{n}, X_{0}, \dots , X_{n-1}$, $X^{*}_{0}, \dots , X^{*}_{n-1}$ are all independent. It is well known that 
\[
\mathbb{E}X_{n} = 2\left(n+1\right)H_{n}-4n,
\] where $H_{n}$ is the $n$th harmonic number $H_n := \sum_{k=1}^{n} k^{-1}$ and (from a simple exact expression) that $\Var X_n = (1 + o(1)) (7 - \frac{2 \pi^2}{3}) n^2$. To study distributional asymptotics, we first center and scale $X_{n}$ as follows:
\[
Z_{n} = \frac{X_{n}-\mathbb{E}X_{n}}{n}.
\]
Using the Wasserstein $d_{2}$-metric, R\"osler \cite{rosler1991limit} proved that $Z_{n}$ converges to $Z$ weakly as $n \rightarrow \infty$.  Using a martingale argument, R\'egnier \cite{regnier1989limiting} proved that the slightly renormalized $\frac{n}{n + 1} Z_n$ converges to $Z$ in $L^{p}$ for every finite~$p$, and thus in distribution; equivalently, the same conclusions hold for $Z_n$.  The random variable~$Z$ has everywhere finite moment generating function with $\mathbb{E}Z = 0$ and 
$\Var Z = 7-\left(2\pi ^{2}/3\right)$. Moreover, $Z$ satisfies the distributional identity
\[
Z \overset{\mathcal{L}}{=} U Z + (1-U) Z^* + g(U).
\]
On the right, $Z^* \overset{\mathcal{L}}{=} Z$; $U$ is uniformly distributed on $\left(0,1\right)$; $U, Z, Z^*$ are independent; and 
\[
g(u) := 2u \ln u + 2 (1-u) \ln (1-u) + 1.
\]
Further, the distributional identity together with the condition that $\mathbb{E}Z$ (exists and) vanishes characterizes the limiting {\tt Quicksort} distribution; this was first shown by 
R\"osler~\cite{rosler1991limit} under the additional condition that $\Var Z < \infty$, and later in full by Fill and Janson~\cite{fill2000fixedpoints}.

Fill and Janson \cite{fill2000smoothness} derived basic properties of the limiting {\tt QuickSort} distribution $\mathcal{L}(Z)$.  In particular, they proved that $\mathcal{L}(Z)$ has a (unique) continuous density~$f$ which is everywhere positive and infinitely differentiable, and for every $k \geq 0$ that $f^{(k)}$ is bounded and enjoys superpolynomial decay in both tails, that is, for each 
$p \geq 0$ and $k \geq 0$ there exists a finite constant $C_{p, k}$ such that $\left| f^{(k)}(x) \right| \leq C_{p,k} |x|^{-p}$ for all $x \in \mathbb{R}$.

In this paper, we study asymptotics of $f(-x)$ and $f(x)$ as $x \to \infty$.  Janson~\cite{janson2015tails} concerned himself with the corresponding asymptotics for the distribution function~$F$ and wrote this: ``Using non-rigorous methods from applied mathematics (assuming an as yet unverified regularity hypothesis), Knessl and Szpankowski~\cite{knessl1999quicksort} found very precise asymptotics of both the left tail and the right tail.''  Janson specifies these Knessl--Szpankowski asymptotics for~$F$ in his equations (1.6)--(1.7).  But Knessl and Szpankowski actually did more, producing asymptotics for~$f$, which were integrated by Janson to get corresponding asymptotics for~$F$.  We utilize the same abbreviation $\gamma := (2-\frac{1}{\ln 2})^{-1}$ as Janson~\cite{janson2015tails}.  
With the \emph{same} constant $c_3$ as in (1.6) of~\cite{janson2015tails}, the density analogues of (1.6) (omitting the middle expression) and (1.7) of~\cite{janson2015tails} are that, as 
$x \to \infty$, Knessl and Szpankowski~\cite{knessl1999quicksort} find
\begin{equation}\label{1.6}
f(-x) = \exp\left[ - e^{\gam x + c_3 + o(1)} \right]
\end{equation}
for the left tail and
\begin{equation}\label{1.7}
f(x) = \exp[ - x \ln x - x \ln \ln x + (1 + \ln 2) x + o(x)]
\end{equation}
for the right tail.

We will come 
as close to these non-rigorous results for the density as Janson~\cite{janson2015tails} does for the distribution function, 
and we also obtain similar asymptotic bounds for tail suprema of absolute values of derivatives of the density.  Although our asymptotics for~$f$ 
imply the asymptotics for~$F$ in Janson's main Theorem~1.1, it is important to note that 
in the case of upper bounds (but not lower bounds) on~$f$ 
we use his results in the proofs of ours.

The next two theorems are our main results.

\begin{theorem}
\label{T:main1}
Let $\gamma := (2-\frac{1}{\ln 2})^{-1}$. As $x \rightarrow \infty$, the limiting {\tt QuickSort} density function~$f$ satisfies
\begin{align}
\label{left}
\exp\left[-e^{\gamma x + \ln \ln x +O(1)}\right] 
&\leq f(-x) \leq \exp \left[-e^{\gamma x + O\left(1\right)}\right],\\
\label{right}
\exp [-x\ln x - x\ln \ln x + O(x)] 
&\leq f(x) \leq \exp [- x \ln x + O(x)].
\end{align}
\end{theorem}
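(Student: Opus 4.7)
The proof splits into four separate arguments for the four inequalities. For both upper bounds I would bootstrap from Janson's corresponding $F$-bounds in~\cite{janson2015tails}, while both lower bounds I would construct directly from the distributional identity for~$Z$.

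For the upper bounds, the key tool is the elementary identity
\[
f(x) = \frac{F(x+h)-F(x)}{h} - \frac{1}{h}\int_x^{x+h}\!\int_x^y f'(s)\,ds\,dy,
\]
valid for any $h>0$, in which Janson's bounds control the first term while the second is at most $h\sup_{[x,x+h]}|f'|$. I would obtain the needed tail control on $|f'|$ by differentiating the integral representation
\[
f(x)=\mathbb{E}\!\left[\frac{1}{U}\,f\!\left(\frac{x-g(U)-(1-U)Z^*}{U}\right)\right]
\]
derived from the fixed-point identity, starting from the crude superpolynomial bounds on the $f^{(k)}$ proved in~\cite{fill2000smoothness} and running a bootstrap: tail control on $f$ produces sharper tail control on $f'$ via the differentiated representation, which then refines the bound on $f$ itself via the mean-value identity with an appropriately chosen $h=h(x)$. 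Iterating the same scheme at higher derivatives delivers the analogous bounds on $|f^{(k)}|$ for every $k\ge 1$.

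For the lower bounds, I would iterate the fixed-point identity to represent $Z=\sum_v w_v\,g(U_v)$ over the nodes of an infinite binary tree, where $w_v$ is the product of the $U_{v'}$'s or $(1-U_{v'})$'s along the path to $v$. For the left tail, since $g$ attains its (negative) minimum $1-2\ln 2$ at $u=1/2$, I would consider the event that every $U_v$ with $|v|\le k:=\gamma x/\ln 2$ lies in a window $[1/2-\delta,\,1/2+\delta]$ with $\delta\asymp x^{-1/2}$: on this event the partial sum is forced to be $\approx -x$, and the $\approx 2^k=e^{\gamma x}$ independent ``near-$1/2$'' constraints produce the doubly-exponential density $\exp[-e^{\gamma x+\ln\ln x+O(1)}]$. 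For the right tail, by contrast, I would force the $U_v$ close to $0$ (where $g\to 1$) only along a single spine of length $k\approx x$; each constraint $g(U)\ge 1-\delta$ has probability of order $\delta/\ln(1/\delta)$, and taking $\delta\asymp 1/x$ yields the density bound $\exp[-x\ln x-x\ln\ln x+O(x)]$.

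The principal obstacle is the upper-bound bootstrap: pushing the (nearly) doubly-exponential $F$-bound through to $f$ and its derivatives without losing the sharp exponent requires both careful balancing in the mean-value identity and a delicate choice of $h(x)$ in each tail. A subsidiary difficulty on the lower-bound side is promoting the constructed event's probability to a pointwise density estimate for~$f$, which needs control on $f'$ near $\pm x$ to rule out wild oscillation in the relevant small interval.
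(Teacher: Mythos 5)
Your two lower-bound arguments are, in substance, the paper's own: the paper also iterates the fixed-point structure, but it does so through the integral equation
$f(x)=\int_0^1\int_{\bbR} f(z)\,f\bigl(\tfrac{x-g(u)-(1-u)z}{u}\bigr)\tfrac1u\,dz\,du$
rather than through the tree expansion $Z=\sum_v w_v g(U_v)$. For the left tail it forces $u$ into a window of width $\asymp x^{-1/2}$ about $1/2$ and obtains the recursion $m_{ka}\ge 2\epsilon^3 m_{(k-1)a}^2$, whose iterate is exactly your product over the $\approx e^{\gamma x}$ nodes of a depth-$(\gamma x/\ln 2)$ binary tree; for the right tail it forces $u\in(0,\delta)$ with $\delta=1/(x\ln x)$ over $\approx x/b$ steps, matching your spine construction. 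Your ``subsidiary difficulty'' of promoting an event probability to a pointwise density estimate evaporates if you run the iteration on the integral equation for $f$ itself: since $f>0$ everywhere, restricting the domain of integration directly yields pointwise lower bounds on $f$, and no control of $f'$ is needed.

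The genuine gap is in the upper bounds. Your mean-value identity with increment $h$, after optimizing $h$, gives only $f(x)\lesssim\bigl(\Fbar(x)\,\sup_{t\ge x}|f'(t)|\bigr)^{1/2}$, and an order-$n$ refinement gives $\|f\|_x\lesssim\|\Fbar\|_x^{1-1/n}\,\|\Fbar^{(n)}\|_x^{1/n}$ --- i.e.\ the Landau--Kolmogorov inequality. With $n$ fixed (or with any bounded number of bootstrap rounds) this irretrievably loses the factor $1-1/n$ in the exponent, yielding only $e^{-(1-1/n)x\ln x+O(x)}$ on the right and a correspondingly weakened doubly exponential bound on the left, never the claimed $e^{-x\ln x+O(x)}$ and $\exp[-e^{\gamma x+O(1)}]$. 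The idea you are missing is to let the order $n=n(x)\to\infty$ with $x$; this requires (i) explicit constants $c_{n,1}\le e^2n/4$ in Landau--Kolmogorov and (ii) a \emph{uniform, explicit} bound on $\sup_{\bbR}|f^{(n)}|$ as a function of $n$, which the paper extracts from the Fourier decay $|\phi(t)|\le 2^{p^2+6p}|t|^{-p}$ of Fill--Janson, namely $\sup|f^{(n)}|\le 2^{n^2+10n+17}$. One then checks that $n(x)$ can be chosen (e.g.\ $\omega(\log x)$ and $o(x)$ on the right) so that all error terms are absorbed into the stated $O(1)$ or $O(x)$. Your proposed substitute --- controlling $f'$ by differentiating the representation $f(x)=\E\bigl[\tfrac1U f\bigl(\tfrac{x-g(U)-(1-U)Z^*}{U}\bigr)\bigr]$ --- does not obviously close the loop: differentiation produces a $U^{-2}$ weight with $\E[U^{-2}]=\infty$, and even after integration by parts the identity expresses $f'$ in terms of $f'$ at rescaled arguments with no exhibited contraction toward the sharp exponent. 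As you yourself flag, this is the principal obstacle, and as written the upper-bound half of the theorem is not established.
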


To state our second main theorem we let $\Fu (x) := F(-x)$ and $\Fbar(x) := 1 - F(x)$, and for a function $h:\bbR\to\bbR$ we write
\begin{equation}
\label{xnorm}
\|h\|_x := \sup_{t \geq x} |h(t)|.
\end{equation}

\begin{theorem}
\label{T:main2}
Given an integer $k \geq 0$, as $x \rightarrow \infty$ the $k^{\rm th}$ derivative of the limiting 
{\tt QuickSort} distribution function~$F$ satisfies
\begin{align}
\label{kleft}
\exp\left[-e^{\gamma x + \ln \ln x +O(1)}\right] 
&\leq \| \Fu^{\left(k\right)} \|_x \leq \exp \left[-e^{\gamma x + O(1)}\right], \\
\label{kright}
\exp [-x \ln x - (k \vee 1) x \ln \ln x + O(x)] 
&\leq \| \Fbar^{\left(k\right)} \|_x \leq \exp [- x \ln x + O(x)].
\end{align}
\end{theorem}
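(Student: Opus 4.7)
The plan is to prove Theorem~\refT{T:main2} in three stages. Throughout I exploit that $F$ is monotone, so $\|\Fu\|_x = F(-x)$ and $\|\Fbar\|_x = 1 - F(x)$, and that for $k \geq 1$ the identities $\Fu^{(k)}(x) = (-1)^k f^{(k-1)}(-x)$ and $\Fbar^{(k)}(x) = -f^{(k-1)}(x)$ reduce everything to estimating $\sup_{t \geq x} |f^{(k-1)}(\pm t)|$. For $k = 0$, both claims follow by integrating the tail bounds on $f$ from Theorem~\refT{T:main1} over a short interval near $\pm x$; this is essentially Janson's main result in~\cite{janson2015tails}.

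For the upper bounds in \eqref{kleft}--\eqref{kright} with $k \geq 1$, I would apply a Landau--Kolmogorov-type interpolation inequality on a short interval $I = [t-1, t+1]$,
\[
\|f^{(k-1)}\|_I \leq C_{n,k}\, \|f\|_I^{1 - (k-1)/n}\, \|f^{(n)}\|_I^{(k-1)/n}, \qquad n > k - 1.
\]
The factor $\|f\|_I$ is controlled by the upper bounds in Theorem~\refT{T:main1}, while $\|f^{(n)}\|_I$ is bounded by the a priori uniform boundedness of every derivative of $f$ established in Fill--Janson~\cite{fill2000smoothness}. Taking $n$ large (allowing $n$ to grow with $x$ if needed) transfers the leading exponential rate of decay from $f$ to $f^{(k-1)}$, with any loss absorbed into the $O(1)$ or $O(x)$ term in the exponent.

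For the lower bounds with $k \geq 1$, the case $k = 1$ is immediate from $\|\Fu^{(1)}\|_x \geq f(-x)$ and $\|\Fbar^{(1)}\|_x \geq f(x)$ together with Theorem~\refT{T:main1}. For $k \geq 2$, repeated integration by parts combined with the vanishing of all derivatives of $f$ at $\pm \infty$ yields the Taylor-remainder identity
\[
|f(\pm x)| \leq \int_x^\infty \frac{(v - x)^{k-2}}{(k-2)!}\, |f^{(k-1)}(\pm v)|\, dv.
\]
Setting $M := \sup_{t \geq x} |f^{(k-1)}(\pm t)|$ and splitting at $v = x + L$ gives
\[
|f(\pm x)| \leq M \cdot \frac{L^{k-1}}{(k-1)!} + \int_{x+L}^\infty \frac{(v-x)^{k-2}}{(k-2)!}\, |f^{(k-1)}(\pm v)|\, dv.
\]
For the left tail, one can take $L = O(1)$: the tail integral is negligible against the doubly-exponentially small lower bound on $f(-x)$ by the superpolynomial decay from~\cite{fill2000smoothness}. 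For the right tail I would take $L = \exp[x \ln\ln x]$; the tail integral is then negligible by the upper bounds on $|f^{(k-1)}|$ just proved. Solving for $M$ and invoking the Theorem~\refT{T:main1} lower bounds on $f(\pm x)$, the factor $L^{-(k-1)} = \exp[-(k-1) x \ln\ln x]$ accounts for the extra $(k - 1) x \ln\ln x$ beyond the $x \ln\ln x$ already present in Theorem~\refT{T:main1}, producing the $(k \vee 1)\, x \ln\ln x = k\, x \ln\ln x$ coefficient in \eqref{kright} when $k \geq 2$.

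The main obstacle is the calibration of $L$ for the right-tail lower bound: $L$ must be large enough that $M \cdot L^{k-1}/(k-1)!$ can support the lower bound $f(x) \gtrsim \exp[-x \ln x - x \ln\ln x + O(x)]$, yet small enough that $\int_{x+L}^\infty$ is negligible. These competing constraints essentially force $L = \exp[x \ln\ln x]$, and it is exactly this choice that produces the extra factor of $k$ in \eqref{kright}. For the argument to go through, the upper bounds of the second stage must be established first, in order to control the tail integral without circularity.
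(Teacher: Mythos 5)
Your lower-bound argument for $k \geq 2$ takes a genuinely different route from the paper's. The paper applies the half-line Landau--Kolmogorov inequality to $h = \Fu$ (resp.\ $h = \Fbar$) with $n = k$, isolates the first derivative via $f(\mp x) \leq \|h'\|_x \leq c_{k,1}\|h\|_x^{(k-1)/k}\|h^{(k)}\|_x^{1/k}$, and rearranges to get $\|h^{(k)}\|_x \geq c_{k,1}^{-k}\|h\|_x^{-(k-1)}[f(\mp x)]^k$; combined with Janson's upper bounds on $F$ and \refT{T:main1} this yields exactly the stated exponents, the factor $k \vee 1$ arising from the $k$-th power of $f(x)$. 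Your Taylor-remainder identity replaces this entirely, and — properly calibrated — is actually \emph{stronger} on the right tail: the only constraint on $L$ is that $\int_{x+L}^{\infty}$ be negligible against $\exp[-x\ln x - x\ln\ln x + O(x)]$, and since $|f^{(k-1)}(v)| \leq \exp[-v\ln v + O(v)]$ this already holds for $L = 2x$, giving $\|\Fbar^{(k)}\|_x \geq c\,f(x)\,x^{-(k-1)} = \exp[-x\ln x - x\ln\ln x + O(x)]$, i.e.\ coefficient $1$ instead of $k\vee 1$ on the $x\ln\ln x$ term. Your claim that the constraints ``force'' $L = \exp[x\ln\ln x]$ is therefore incorrect, although that choice does still (exactly) deliver the stated, weaker bound. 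The upper-bound stage is in the same spirit as the paper's, which runs an induction on $k$ over the half-line with interpolation order $n(x) \to \infty$ rather than a direct interpolation between orders $0$ and $n$.

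There are, however, two concrete gaps. First, the left-tail lower bound fails as written: with $L = O(1)$, superpolynomial decay bounds the tail integral only by a superpolynomially small quantity, which is astronomically \emph{larger} than the doubly-exponentially small $f(-x) \geq \exp[-e^{\gamma x + \ln\ln x + O(1)}]$, so it cannot render the tail negligible. You must instead invoke the just-proved upper bound $|f^{(k-1)}(-v)| \leq \exp[-e^{\gamma v + O(1)}]$ (so the left tail, too, depends on the second stage), and even then the $\ln\ln x$ discrepancy between that bound at $v = x+L$ and the lower bound on $f(-x)$ forces $L \geq \gamma^{-1}\ln\ln x + O(1)$; with that choice the division by $L^{k-1}$ is absorbed and the conclusion survives. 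Second, in the right-tail upper bound the interpolation order must satisfy $n \gtrsim (k-1)\ln x$ (otherwise $\|f\|^{1-(k-1)/n}$ costs a term of order $\frac{k-1}{n}x\ln x \neq O(x)$), and then mere ``boundedness of every derivative'' is insufficient: you need an explicit bound on $\sup|f^{(n)}|$ as a function of $n$ — the paper proves $\sup|f^{(n)}| \leq 2^{n^2+10n+17}$ for precisely this purpose — in order to verify $(\sup|f^{(n)}|)^{(k-1)/n} = e^{O(n)} = e^{o(x)}$. Relatedly, the Landau--Kolmogorov inequality on a bounded interval is not purely multiplicative (a competing term of the form $\|h\|_I\,|I|^{-k}$ appears); here $\|f^{(n)}\|_I$ dwarfs $\|f\|_I\,|I|^{-n}$ so you are in the multiplicative regime, but this needs to be checked — or avoided altogether by using the half-line norms $\|\cdot\|_x$ as the paper does.
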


\begin{remark}
\label{R:nonrigorous}
(a)~Using the monotonicity of~$F$, it is easy to see that the assertions of \refT{T:main2} for $k = 0$ are equivalent to the main Theorem~1.1 of Janson~\cite{janson2015tails}, which agrees with the formulation of our \refT{T:main2} in that case except that the four bounds are on $|\Fu(x)|$ and 
$|\Fbar(x)|$ instead of the tail suprema 
$\|\Fu\|_x$ and $\|\Fbar\|_x$.  Further, our \refT{T:main1} implies the assertions of \refT{T:main2} for 
$k = 1$.   
So we need only prove \refT{T:main1} and \refT{T:main2} for $k \geq 2$. 

(b)~The non-rigorous arguments of Knessl and Szpankowski~\cite{knessl1999quicksort} suggest
that the following asymptotics as $x \to \infty$ obtained by repeated formal differentiation of 
\eqref{1.6}--\eqref{1.7} are correct for every $k \geq 0$:
\begin{align}
\label{leftKS}
f^{(k)}(-x) &= \exp\left[ - e^{\gam x + c_3 + o(1)} \right], \\
\label{rightKS}
f^{(k)}(x) &= (-1)^k \exp[ - x \ln x - x \ln \ln x + (1 + \ln 2) x + o(x)].
\end{align}
But these remain conjectures for now.
Unfortunately, 
for $k \geq 1$
we don't even know how to identify rigorously the asymptotic signs of $f^{(k)}(\mp x)$!  Concerning $k = 1$, it has long been conjectured that~$f$ is unimodal.  This would of course imply that 
$f'(-x) > 0$ and $f'(x) < 0$ for sufficiently large~$x$.
\end{remark}

As already mentioned, Fill and Janson~\cite{fill2000smoothness} proved that or each $p \geq 0$ and 
$k \geq 0$ there exists a finite constant $C_{p, k}$ such that $\left| f^{(k)}(x) \right| \leq C_{p,k} |x|^{-p}$ for all $x \in \mathbb{R}$.  Our technique for proving the upper bounds in Theorems~\ref{T:main1} and~\ref{T:main2} is to use explicit bounds on the constants $C_k := C_{0, k}$ together with the 
Landau--Kolmogorov inequality (see, for example, \cite{stechkin1967inequalities}).
  
Our paper is organized as follows.  In Section~\ref{S:prelims} we deal with preliminaries: We 
recall an integral equation for~$f$ that is the starting point for our lower-bound results in \refT{T:main1}, review the Landau--Kolmogorov inequality, and bound $C_k$ explicitly in terms of~$k$.  
Sections~\ref{LL} and~\ref{RL} derive the stated lower bounds on the left and right tails, respectively, of~$f$ using an iterative approach similar to that of Janson \cite{janson2015tails} for the distribution function. 
In Section \ref{HL} we establish the left-tail results claimed in~\eqref{left} and~\eqref{kleft}.  
In Section \ref{HR}, we establish the right-tail results claimed in~\eqref{right} and~\eqref{kright}.
\newpage 
\section{Preliminaries}
\label{S:prelims}

\ignore{
\subsection{Janson's asymptotic bounds on~$F$}
\label{S:jansonF}
The upper bounds in the following main Theorem 1.1 of Janson~\cite{janson2015tails} are used in our proof of the upper bounds in our Theorems~\ref{T:main1} and~\ref{T:main2} AND IN PROOFS OF LOWER BOUNDS ON HIGHER DERIVATIVES IN BOTH TAILS.

\begin{proposition}
\label{P:Jansonprop}
Let $\gamma := (2-\frac{1}{\ln 2})^{-1}$. As $x \rightarrow \infty$, the limiting {\tt QuickSort} distribution function~$F$ satisfies
\begin{align}
\label{leftF}
\exp\left[-e^{\gamma x + \ln \ln x +O(1)}\right] 
&\leq F(-x) \leq \exp \left[-e^{\gamma x + O\left(1\right)}\right],\\
\label{rightF}
\exp [-x\ln x - x\ln \ln x + O(x)] 
&\leq 1 - F(x) \leq \exp [- x \ln x + O(x)].
\end{align}
\end{proposition}
}

\subsection{An integral equation for~$f$}
\label{S:inteq}
Fill and Janson \cite[Theorem 4.1 and~(4.2)]{fill2000smoothness} produced an integral equation satisfied by~$f$, namely,
\begin{equation}
\label{f integral equation}
f(x) 
= \int_{u=0}^1 \int_{z\in \mathbb{R}} f(z)\,f\!\left(\frac{x-g(u)-(1-u)z}{u}\right)\frac{1}{u}\,dz\,du.
\end{equation}
This integral equation will be used in the proofs of our lower-bound results for~$f$.

\subsection{Landau--Kolmogorov inequality}
\label{S:LK}
For an overview of the Landau--Kolmogorov inequality, see~\cite[Chapter~1]{MPFbook}.  Here we state a version of the inequality well-suited to our purposes; see~\cite{Matorin} and~\cite[display~(21) and the display following~(17)]{stechkin1967inequalities}.

\begin{lemma}
\label{L:LK}
Let $n \geq 2$, and suppose $h:\bbR \to \bbR$ has~$n$ derivatives.  If~$h$ and $h^{(n)}$ are both bounded, then for $1 \leq k < n$ so is $h^{(k)}$.  Moreover, there exist constants $c_{n, k}$ (not depending on~$h$) such that, for every $x \in \bbR$, the supremum norm $\|\cdot\|_x$ defined at~\eqref{xnorm} satisfies
\[
\|h^{(k)}\|_x \leq c_{n, k}\,\|h\|_x^{1 - (k / n)}\,\|h^{(n)}\|_x^{k / n}, \quad 1 \leq k < n.
\]
Further, for $1 \leq k \leq n / 2$ the best constants $c_{n, k}$ satisfy
\[
c_{n, k} \leq n^{(1 / 2) [1 - (k / n)]} (n - k)^{- 1 / 2} \left(\frac{e^2 n}{4 k}\right)^k 
\leq \left(\frac{e^2 n}{4 k}\right)^k.
\] 
\end{lemma}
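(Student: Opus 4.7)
The plan is to reduce the statement to a classical half-line Landau--Kolmogorov inequality and then invoke Stechkin's explicit bound on the sharp constants. The key observation is that $\|h\|_x$ is nothing but the $L^\infty$ norm of $h$ restricted to the half-line $[x,\infty)$, so after translating by $x$ the claimed inequality is a half-line Landau--Kolmogorov inequality on $[0,\infty)$ with a constant independent of $x$.

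First I would invoke the classical half-line inequality of Matorin~\cite{Matorin}: for any $\varphi$ on $[0,\infty)$ with $\varphi$ and $\varphi^{(n)}$ bounded, each intermediate derivative $\varphi^{(k)}$ with $1\le k<n$ is automatically bounded and satisfies
\[
\sup_{t\ge 0}|\varphi^{(k)}(t)| \le c_{n,k}\,\bigl(\sup_{t\ge 0}|\varphi(t)|\bigr)^{1-k/n}\bigl(\sup_{t\ge 0}|\varphi^{(n)}(t)|\bigr)^{k/n}
\]
for some constants $c_{n,k}$ depending only on $n$ and $k$. Applying this to $\varphi(t):=h(t+x)$ yields both the qualitative boundedness assertion and the multiplicative inequality of the lemma, with the same constants $c_{n,k}$ and with the sup taken over $[x,\infty)$, i.e.\ precisely the norm $\|\cdot\|_x$.

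For the explicit bound on $c_{n,k}$, I would quote Stechkin's estimates in~\cite[display~(21) and the display following~(17)]{stechkin1967inequalities}, which give $c_{n,k}\le n^{(1/2)[1-(k/n)]}(n-k)^{-1/2}(e^2 n/(4k))^k$ throughout $1\le k\le n/2$. The final simplification to $(e^2 n/(4k))^k$ amounts to $n^{(1/2)[1-(k/n)]}(n-k)^{-1/2}\le 1$, equivalently $(1-k/n)\ln n \le \ln(n-k)$, which is a routine one-variable estimate holding for all $n$ large enough in the relevant range of $k$; since the lemma is applied only with $n$ eventually large (to bound high-order derivatives of~$f$), any borderline small cases are immaterial.

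The main obstacle is entirely bibliographic: one must select the half-line version of Landau--Kolmogorov (whose sharp constants strictly exceed the full-line ones) rather than the better-known full-line inequality, and verify that Stechkin's bound on the half-line sharp constant is the one just quoted. No genuine new mathematics is required.
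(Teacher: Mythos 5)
Your proposal matches the paper's treatment: the lemma is not proved there but is quoted directly from Matorin's half-line Landau--Kolmogorov inequality together with Stechkin's explicit constant bounds, exactly the two sources you invoke, with the translation $\varphi(t)=h(t+x)$ implicit in the statement of the $\|\cdot\|_x$ norms. The only caveat is your final simplification $n^{(1/2)[1-(k/n)]}(n-k)^{-1/2}\le 1$, which in fact fails for $(n,k)=(2,1)$ and $(3,1)$ (e.g.\ it equals $2^{1/4}$ when $n=2$, $k=1$), though the discrepancy is a bounded factor that is harmless in every application in the paper.
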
 

\subsection{Explicit constant upper bounds for absolute derivatives}
\label{S:poly}
We also make use of the following two results extracted 
from~\cite[Theorem~2.1 and (3.3)]{fill2000smoothness}.

\begin{lemma}
Let~$\phi$ denote the characteristic function 
corresponding to~$f$.
Then for every real $p \geq 0$ we have
\[
|\phi(t)| \leq 2^{p^2 + 6 p} |t|^{-p} \quad \mbox{\rm for all $t \in \bbR$}.
\] 
\end{lemma}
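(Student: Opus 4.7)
The starting point is the functional equation for~$\phi$ obtained by conditioning on~$U$ in the distributional identity $Z \eqd UZ + (1-U)Z^* + g(U)$:
\[
\phi(t) = \int_0^1 \phi(ut)\, \phi((1-u)t)\, e^{i t g(u)}\, du,
\]
which, at the level of absolute values, yields the key inequality
\[
|\phi(t)| \leq \int_0^1 |\phi(ut)|\, |\phi((1-u)t)|\, du.
\]
My plan is to prove the stated inequality first for every nonnegative integer~$n$ in place of~$p$ by induction on~$n$, with an inductive constant $C_n \leq 2^{n^2+5n}$, and then to deduce the real-$p$ case by a H\"older-type interpolation with the trivial bound $|\phi|\leq 1$.

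For the integer induction, the base case $n=0$ is immediate from $|\phi|\leq 1$. For the inductive step, assume $|\phi(s)| \leq C_n |s|^{-n}$, substitute into the key inequality, and split at $u = 1/2$ to exploit the $u \leftrightarrow 1-u$ symmetry. On the half $u \in [0,1/2]$ the factor $|\phi((1-u)t)|$ is bounded by the inductive hypothesis since $|(1-u)t| \geq |t|/2$, costing only a factor~$2^n$. The remaining factor $|\phi(ut)|$ is then handled separately on small-$u$ and large-$u$ portions: use $|\phi|\leq 1$ below a threshold $T_n \sim C_n^{1/n}/|t|$ and the inductive bound above. Each resulting integral is elementary of the type $\int u^{-n}\,du$, and their combined contribution supplies one extra power of $|t|^{-1}$, yielding a bound $C_{n+1}|t|^{-(n+1)}$ in which $C_{n+1}$ satisfies a recurrence of the shape
\[
\log_2 C_{n+1} \leq \left(1 + \frac{1}{n}\right)\log_2 C_n + (n+1) + O(\log n),
\]
which is closed by the choice $\log_2 C_n = n^2 + 5n$.

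To pass to real $p \geq 0$, I would apply the interpolation $|\phi(t)| = |\phi(t)|^{1-\alpha}\,|\phi(t)|^\alpha \leq (C_n |t|^{-n})^\alpha$ with $n = \lceil p\rceil$ and $\alpha = p/n \in [0,1]$, which gives $|\phi(t)| \leq C_n^{p/n}|t|^{-p} = 2^{p(n+5)}|t|^{-p} \leq 2^{p^2+6p}|t|^{-p}$ for $|t| \geq 1$, since $n \leq p+1$; for $|t| < 1$ the assertion is trivial because $|\phi| \leq 1$ and $2^{p^2+6p}|t|^{-p} \geq 1$. The main obstacle is arithmetical rather than conceptual: one must propagate constants carefully through the induction so that the closed-form $C_n \leq 2^{n^2+5n}$ holds uniformly for all integers $n \geq 0$ (not merely asymptotically), and in particular one must choose the threshold $T_n$ cleanly enough to produce a genuine improvement at every step while still yielding a simple closed-form bound at the end.
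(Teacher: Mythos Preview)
The paper does not prove this lemma; it quotes it verbatim from Fill and Janson, \emph{Smoothness and decay properties of the limiting QuickSort density function} (their Theorem~2.1 and equation~(3.3)). So there is no in-paper proof to compare against; the relevant comparison is with the original argument.

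Your bootstrap from~$n$ to~$n+1$ is essentially the mechanism Fill and Janson use, but your proposal has a genuine gap at the start of the induction. Once you pass to the modulus inequality
\[
|\phi(t)| \leq \int_0^1 |\phi(ut)|\,|\phi((1-u)t)|\,du,
\]
the base case $n=0$ (i.e.\ $|\phi|\leq 1$) feeds back only $|\phi(t)|\leq 1$: no decay whatsoever is produced, and your threshold $T_0\sim C_0^{1/0}$ is undefined. The first nontrivial bit of decay cannot come from the modulus inequality; it must come from the oscillatory factor $e^{itg(u)}$ that you discarded. In the original proof the initial estimate is obtained from the full identity $\phi(t)=\int_0^1 \phi(ut)\phi((1-u)t)e^{itg(u)}\,du$ via an oscillatory-integral (van der Corput/integration-by-parts) argument exploiting that $g'$ is monotone and unbounded near $0$ and~$1$. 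Only after some decay is in hand does the self-improving bootstrap take over.

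There is also a secondary arithmetical issue at $n=1\to 2$: with $|\phi(v)|\leq C_1|v|^{-1}$ the tail integral $\int_T^{|t|/2} v^{-1}\,dv$ is logarithmically divergent, so the step as you wrote it yields only $O(|t|^{-2}\ln|t|)$ rather than $O(|t|^{-2})$. This is fixable (for instance, iterate the inequality once more, or start the bootstrap from a fractional exponent), but it confirms your own caveat that the constants need more care than the sketch provides.
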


\begin{lemma}
For every integer $k \geq 0$ we have
\[
\sup_{x \in \bbR} |f^{(k)}(x)| \leq \frac{1}{2 \pi} \int_{t = - \infty}^{\infty} |t|^k\,|\phi(t)|\,dt.
\]
\end{lemma}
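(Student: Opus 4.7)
The plan is to derive the bound via the Fourier inversion formula and repeated differentiation under the integral sign. The previous lemma gives super-polynomial decay of $\phi$ (namely $|\phi(t)| \leq 2^{p^2+6p}|t|^{-p}$ for every $p \geq 0$), which ensures that $t \mapsto |t|^k\,|\phi(t)|$ is integrable on $\bbR$ for every $k \geq 0$ (apply the estimate with, say, $p = k+2$ on $|t| \geq 1$ and note $\phi$ is bounded by $|\phi(0)| = 1$ on $|t| \leq 1$). In particular $\phi$ itself is integrable, so by the Fourier inversion theorem
\[
f(x) = \frac{1}{2\pi} \int_{-\infty}^{\infty} e^{-i t x}\,\phi(t)\,dt \quad \text{for all } x \in \bbR.
\]

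Next I would show by induction on $k$ that one may differentiate $k$ times under the integral sign to obtain
\[
f^{(k)}(x) = \frac{1}{2\pi} \int_{-\infty}^{\infty} (-i t)^k\,e^{-i t x}\,\phi(t)\,dt.
\]
The justification at each step uses the standard dominated-convergence criterion: the difference quotient in~$x$ of the integrand at level~$j$ is dominated in absolute value by $|t|^{j+1}\,|\phi(t)|$, which is integrable by the remark above. This also shows that $f^{(k)}$ is continuous.

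Finally, taking absolute values inside the integral (using $|e^{-itx}| = 1$ and $|(-it)^k| = |t|^k$) gives
\[
|f^{(k)}(x)| \leq \frac{1}{2\pi} \int_{-\infty}^{\infty} |t|^k\,|\phi(t)|\,dt
\]
for every $x \in \bbR$, so the same bound holds for $\sup_{x \in \bbR} |f^{(k)}(x)|$. There is no real obstacle here; the only point requiring a moment's care is the verification of the integrability of $|t|^k |\phi(t)|$, which is immediate from the preceding lemma, and the routine justification of differentiation under the integral.
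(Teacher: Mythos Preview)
Your argument is correct and is exactly the standard Fourier-inversion proof. The paper does not actually prove this lemma here; it simply extracts it from \cite[(3.3)]{fill2000smoothness}, where the argument is the same as yours.
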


Using these two results, it is now easy to bound $f^{(k)}$.

\begin{proposition}
\label{P:fkbound}
For every integer $k \geq 0$ we have
\[
\sup_{x \in \bbR} |f^{(k)}(x)| \leq 2^{k^2 + 10 k + 17}.
\]
\end{proposition}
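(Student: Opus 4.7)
The plan is to combine the two preceding lemmas by splitting the Fourier-inversion integral
$\sup_x|f^{(k)}(x)|\le\frac1{2\pi}\int_{-\infty}^{\infty}|t|^k|\phi(t)|\,dt$
into the near-origin piece $|t|\le 1$ and the tail piece $|t|>1$, and using a different bound on $|\phi(t)|$ in each region.

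For $|t|\le 1$, the trivial bound $|\phi(t)|\le 1$ (valid for any characteristic function) gives
$\int_{-1}^{1}|t|^k|\phi(t)|\,dt\le\int_{-1}^{1}|t|^k\,dt=\frac{2}{k+1}\le 2.$
For $|t|>1$, we need a polynomial decay strong enough to make $|t|^k|\phi(t)|$ integrable. Applying the first lemma with the specific choice $p=k+2$ yields
$|t|^k|\phi(t)|\le 2^{p^2+6p}|t|^{k-p}=2^{(k+2)^2+6(k+2)}|t|^{-2}=2^{k^2+10k+16}|t|^{-2},$
so $\int_{|t|>1}|t|^k|\phi(t)|\,dt\le 2^{k^2+10k+16}\cdot 2=2^{k^2+10k+17}.$

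Combining the two pieces and invoking the second lemma gives
$\sup_x|f^{(k)}(x)|\le\frac{1}{2\pi}\bigl(2+2^{k^2+10k+17}\bigr)\le\frac{1}{\pi}\cdot 2^{k^2+10k+17}\le 2^{k^2+10k+17},$
as required. There is no real obstacle here: the only (very mild) task is choosing $p$ so that (i) the exponent $|t|^{k-p}$ is integrable on $\{|t|>1\}$, and (ii) the resulting constant $2^{p^2+6p}$ absorbs the small contribution from $|t|\le 1$ after dividing by $2\pi$. The choice $p=k+2$ accomplishes both and produces exactly the advertised constant $2^{k^2+10k+17}$.
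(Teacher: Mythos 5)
Your proof is correct and is essentially the paper's own argument: the same split of the inversion integral at $|t|=1$, the same trivial bound $|\phi(t)|\le 1$ near the origin, and the same choice $p=k+2$ in the characteristic-function decay lemma, yielding the identical constant. No issues.
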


\begin{proof}
For every integer $k \geq 0$ we have
\begin{align*}
\sup_{x \in \bbR} |f^{(k)}(x)| 
&\leq \frac{1}{2 \pi} \int_{t = - \infty}^{\infty}\!|t|^k\,|\phi(t)|\,dt \\
&\leq \frac{1}{2 \pi} \left[\int_{|t| > 1}\!|t|^k\,|\phi(t)|\,dt + \int_{|t| \leq 1}\!|t|^k\,|\phi(t)|\,dt\right] \\
&\leq \frac{1}{2 \pi} \left[\int_{|t| > 1}\!2^{(k + 2)^2 + 6 (k + 2)} t^{-2}\,dt + \int_{|t| \leq 1}\!|t|^k\,dt\right] \\
&\leq \frac{1}{\pi} \left[2^{k^2 + 10 k + 16} + \frac{1}{k + 1}\right] \leq 2^{k^2 + 10 k + 17},
\end{align*}
as desired.
\end{proof}

\section{Left Tail Lower Bound on~$f$}
\label{LL}

Our iterative approach to finding the left tail lower bound on~$f$ in \refT{T:main1} is similar to the method used by Janson~\cite{janson2015tails} for~$F$.
The following lemma gives us an inequality that is essential in this section; as we shall see, it is established from a recurrence inequality.  For $z \geq 0$ define
\[
m_{z} := \left(\min\limits_{x\in \left[-z, 0\right]} f\left(x\right)\right)\wedge 1. 
\]
\begin{lemma}
\label{recurrence relation of left tail lower bound}
Given $\epsilon \in (0, 1/10)$, let $a \equiv a(\epsilon) := -g\left(\frac{1}{2}-\epsilon\right) > 0$. 
Then for any integer $k \geq 2$ we have
\vspace{-.1in}
\[
m_{ka} \geq \left(2\epsilon^{3} m_{2a}\right)^{2^{k-2}}.
\]
\end{lemma}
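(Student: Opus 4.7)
\emph{Plan.} The plan is to establish the one-step recurrence
\[
m_{(k+1)a} \geq 2\epsilon^3\,m_{ka}^2 \quad \text{for every } k \geq 2,
\]
and iterate. By induction on $k$ this recurrence gives the stronger bound $m_{ka} \geq (2\epsilon^3)^{2^{k-2}-1}\,m_{2a}^{2^{k-2}}$ (the base case $k = 2$ is $m_{2a} \geq m_{2a}$, and the inductive step reads $m_{(k+1)a} \geq 2\epsilon^3 m_{ka}^2 \geq 2\epsilon^3 \cdot (2\epsilon^3)^{2^{k-1}-2} m_{2a}^{2^{k-1}} = (2\epsilon^3)^{2^{k-1}-1} m_{2a}^{2^{k-1}}$). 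Since $2\epsilon^3 \leq 1$ for $\epsilon \in (0,1/10)$, we have $(2\epsilon^3)^{2^{k-2}-1} \geq (2\epsilon^3)^{2^{k-2}}$, yielding the claim $m_{ka} \geq (2\epsilon^3 m_{2a})^{2^{k-2}}$.

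To prove the recurrence, fix $x \in [-(k+1)a, 0]$. For $x \in [-2a, 0]$ the trivial estimate $f(x) \geq m_{2a} \geq m_{ka} \geq 2\epsilon^3 m_{ka}^2$ (the last step using $m_{ka} \leq 1$) already suffices, so assume $x \in [-(k+1)a, -2a]$. Apply the integral equation~\eqref{f integral equation} and restrict the $u$-integration to $I := [1/2 - \epsilon, 1/2 + \epsilon]$ (on which $g(u) \leq -a$, by symmetry $g(u) = g(1-u)$ and $g$ being minimized at $1/2$), together with the $z$-integration restricted to the set $J(u, x)$ of $z \in [-ka, 0]$ for which the transformed argument $w(u, z) := [x - g(u) - (1-u)z]/u$ also lies in $[-ka, 0]$. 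On this restricted region, $f(z)\,f(w(u,z)) \geq m_{ka}^2$ and $1/u \geq 1$, so
\[
f(x) \geq m_{ka}^2 \int_I |J(u, x)|\,du.
\]

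The crux is to show $\int_I |J(u, x)|\,du \geq 2\epsilon^3$ uniformly in $x \in [-(k+1)a, -2a]$. Writing $u = 1/2 + s$ and using the Taylor expansions $-g(u) = (2\ln 2 - 1) - 4s^2 + O(s^4)$ and $a = -g(1/2 - \epsilon) = (2\ln 2 - 1) - 4\epsilon^2 + O(\epsilon^4)$, one computes $x - g(u) = -ka + 4(\epsilon^2 - s^2) + [x + (k+1)a]$. The constraint $w \in [-ka, 0]$ forces $z$ into an interval $[z_-(u), z_+(u)]$ of length $kau/(1-u)$, and an elementary case analysis on which of $\{-ka, 0, z_-(u), z_+(u)\}$ is the active endpoint of $J(u, x)$ shows that, for each fixed $u$, $|J(u, x)|$ is minimized at $x = -(k+1)a$, where $J(u, -(k+1)a) = [-ka,\,-ka + 4(\epsilon^2 - s^2)/(1/2 - s)]$ has length $4(\epsilon^2 - s^2)/(1/2 - s)$. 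Integrating,
\[
\int_I |J(u, x)|\,du \geq \int_{-\epsilon}^{\epsilon} \frac{4(\epsilon^2 - s^2)}{1/2 - s}\,ds \geq \frac{1}{1/2 + \epsilon}\int_{-\epsilon}^{\epsilon} 4(\epsilon^2 - s^2)\,ds = \frac{16\epsilon^3/3}{1/2 + \epsilon} \geq 2\epsilon^3
\]
for $\epsilon \in (0, 1/10)$, with room to spare. The chief obstacle is the case analysis pinning down the active endpoints of $J(u, x)$ as $(u, x)$ vary, combined with the bookkeeping needed to verify that the $O(\epsilon^4)$ and $O(s^4)$ Taylor corrections absorbed in the expressions above do not erode the clean prefactor $2$ in front of $\epsilon^3$.
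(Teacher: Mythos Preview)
Your overall strategy coincides with the paper's: establish the one-step recurrence $m_{(k+1)a}\ge 2\epsilon^{3}m_{ka}^{2}$ from the integral equation and then iterate. The difference lies in the choice of integration region. The paper writes $x=-z-a$ and restricts to the fixed \emph{rectangle} $u\in[\tfrac12-\tfrac{\epsilon}{2},\tfrac12]$, $y\in[-z,-z+\epsilon^{2}]$; a few direct elementary inequalities (no Taylor expansions, no case splits) show that the second argument $\tfrac{-z-a-g(u)-(1-u)y}{u}$ lands in $[-z,0]$ on that rectangle, and the constant $2\epsilon^{3}$ simply drops out as $2\cdot(\epsilon/2)\cdot\epsilon^{2}\cdot 2$ (symmetry factor, $u$-width, $y$-width, and $1/u\ge 2$). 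Your version instead takes the full symmetric interval $I=[\tfrac12-\epsilon,\tfrac12+\epsilon]$ and the implicitly defined set $J(u,x)$, and must then lower-bound $\int_{I}|J(u,x)|\,du$ uniformly in $x$; this is what forces the endpoint case analysis and the Taylor-remainder bookkeeping you flag as the ``chief obstacle.''

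Both obstacles are in fact surmountable---for each fixed $u$, the length $|J(u,\cdot)|$ is unimodal in $x$ (increasing, then constant, then decreasing as the window $[z_{-},z_{+}]$ slides across $[-ka,0]$), and comparing endpoints gives $|J(u,-2a)|\asymp a$ versus $|J(u,-(k+1)a)|\asymp\epsilon^{2}$, so the minimum is at $x=-(k+1)a$; and the remainder $R(t):=g(\tfrac12+t)-g(\tfrac12)-4t^{2}$ is nonnegative and increasing in $|t|$, so $g(\tfrac12-\epsilon)-g(\tfrac12+s)\ge 4(\epsilon^{2}-s^{2})$ exactly, with no erosion of the prefactor. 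But neither verification is quite as ``elementary'' as you suggest, and you have left both unproved. The paper's rectangular restriction sidesteps all of this at the cost of a slightly less sharp constant (which is irrelevant for the final statement).
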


We delay the proof of \refL{recurrence relation of left tail lower bound} in order to show next how the lemma leads us to the desired lower bound in \eqref{left} on the left tail of~$f$ by using the same technique as in \cite{janson2015tails} for~$F$.

\begin{proposition}
As $x \to \infty$ we have
\[
\ln f(-x) \geq -e^{\gamma x + \ln \ln x +O(1)}.
\]
\end{proposition}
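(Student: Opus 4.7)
The strategy is Janson's iterative bootstrap: apply \refL{recurrence relation of left tail lower bound} with both $\epsilon$ and $k$ chosen as functions of $x$, then optimize. The key underlying fact is that $\gamma = (\ln 2)/a(0)$, where $a(0) = 2 \ln 2 - 1 = -g(1/2)$. Since $g$ is symmetric about and minimized at $u = 1/2$ with $g'(1/2) = 0$ and $g''(1/2) = 8$, Taylor expansion gives $g\bigpar{\tfrac12 - \epsilon} = (1 - 2 \ln 2) + 4 \epsilon^2 + O(\epsilon^4)$, so that $a(\epsilon) = a(0) - 4 \epsilon^2 + O(\epsilon^4)$ and
\[
\frac{\ln 2}{a(\epsilon)} = \gamma + O(\epsilon^2) \quad \text{as } \epsilon \downarrow 0.
\]

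Given $x$ large, I would set $\epsilon = \epsilon(x) := c/\sqrt{x}$ for a fixed small $c > 0$ (ensuring $\epsilon \in (0,1/10)$), and take $k = \lceil x/a(\epsilon) \rceil$, so that $ka(\epsilon) \in [x, x + a(0))$ and $k \geq 2$ eventually. Because $-x \in [-x,0]$ we have $f(-x) \geq m_x$, and because $z \mapsto m_z$ is clearly non-increasing we have $m_x \geq m_{ka(\epsilon)}$. Applying \refL{recurrence relation of left tail lower bound} and taking logarithms then yields
\[
\ln f(-x) \;\geq\; \ln m_{ka(\epsilon)} \;\geq\; -\,2^{k-2}\,\bigabs{\ln(2\epsilon^3 m_{2a(\epsilon)})}.
\]
The exponential factor satisfies $2^{k-2} \leq \tfrac12 \exp[(\ln 2/a(\epsilon))x] = \tfrac12 \exp[\gamma x + O(\epsilon^2 x)] = e^{\gamma x + O(1)}$, using $\epsilon^2 x = c^2$. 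For the logarithmic factor, $[-2a(\epsilon),0] \subseteq [-2a(0),0]$ and~$f$ is continuous and strictly positive on this fixed compact set, so there is a positive constant~$M$ (independent of~$\epsilon$) with $m_{2a(\epsilon)} \geq M$; hence
\[
\bigabs{\ln(2\epsilon^3 m_{2a(\epsilon)})} \;\leq\; 3\abs{\ln \epsilon} + \abs{\ln(2M)} \;=\; \tfrac32 \ln x + O(1).
\]
Multiplying the two estimates,
\[
\abs{\ln f(-x)} \;\leq\; e^{\gamma x + O(1)} \cdot O(\ln x) \;=\; e^{\gamma x + \ln \ln x + O(1)},
\]
which is the desired lower bound on $\ln f(-x)$.

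The main obstacle is the two-sided balance dictating the choice $\epsilon \asymp x^{-1/2}$. The Taylor perturbation $O(\epsilon^2)$ sitting in the exponent $(\ln 2)/a(\epsilon) = \gamma + O(\epsilon^2)$ gets amplified by the factor~$x$ from $2^{k-2}$, and must remain $O(1)$ to preserve the leading constant $\gamma$ in the exponent; this caps $\epsilon$ at $\Theta(x^{-1/2})$. Simultaneously the prefactor $3\abs{\ln \epsilon}$ is of order $\ln x$ at that scale, and upon exponentiation contributes exactly the $\ln\ln x$ secondary term claimed. Any slower decay of $\epsilon$ would worsen the leading coefficient $\gamma$, while any faster decay would worsen the $\ln\ln x$ term, so the choice $\epsilon \asymp x^{-1/2}$ is essentially the only one that produces the stated bound.
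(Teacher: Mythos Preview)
Your proof is correct and follows essentially the same route as the paper's: apply \refL{recurrence relation of left tail lower bound} with $k=\lceil x/a(\epsilon)\rceil$ and $\epsilon\asymp x^{-1/2}$, then use the Taylor expansion $a(\epsilon)=a(0)+O(\epsilon^2)$ so that $(\ln 2)/a(\epsilon)=\gamma+O(1/x)$ lands the leading exponent exactly on $\gamma x$, while the $3|\ln\epsilon|=\tfrac32\ln x+O(1)$ prefactor produces the $\ln\ln x$ correction. Your write-up is in fact a bit more careful than the paper's in two places: you make explicit the uniform lower bound $m_{2a(\epsilon)}\ge M$ via $[-2a(\epsilon),0]\subseteq[-2a(0),0]$ (the paper silently treats $\ln m_{2a}$ as $O(1)$), and you explain why the scale $\epsilon\asymp x^{-1/2}$ is forced.
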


\begin{proof}
By Lemma \ref{recurrence relation of left tail lower bound}, for $x > a$ we have
\[
f(-x) \geq m_x \geq m\left( \left\lceil \frac{x}{a} \right\rceil a \right) 
\geq \left(2\epsilon^{3}m_{2a}\right)^{2^{\lceil x/a \rceil-2}} \geq \left(2\epsilon^{3}m_{2a}\right)^{2^{x/a}},
\]
provided $\epsilon$ is sufficiently small that $2 \epsilon^3 m_{2 a} < 1$.
The same as Janson \cite{janson2015tails}, we pick $\epsilon = x^{-1/2}$ and, setting 
$\gamma = (2-\frac{1}{\ln 2})^{-1}$, get $\frac{1}{a} = \frac{\gamma}{\ln 2}+O(x^{-1})$ and
\begin{align*}
\ln f(-x)
&\geq 2^{\frac{\gamma}{\ln 2}x+O(1)} \cdot \ln \left(2\epsilon^{3}m_{2a}\right) \\ 
&= e^{\gamma x + O(1)}\cdot \left( -\mbox{$\frac{3}{2}$} \ln x + \ln m_{2a} + \ln 2 \right)\\
&\geq -e^{\gamma x + \ln \ln x + O(1)}.
\end{align*}
\end{proof}

Now we go back to prove Lemma \ref{recurrence relation of left tail lower bound}:
\begin{proof}[Proof of \refL{recurrence relation of left tail lower bound}]
By the integral equation~\eqref{f integral equation} satisfied by~$f$ (and symmetry in~$u$ about $u = 1/2$), for arbitrary~$z$ and~$a$ we have
\begin{equation}
\label{inteqza}
f(-z-a) 
= 2 \int_{u=0}^{1/2}{\int_{y \in \mathbb{R}} f(y) f\left(\frac{-z-a-g(u)-(1-u)y}{u}\right)}\frac{1}{u}\,dy\,du.
\end{equation}
Since~$f$ is everywhere positive, we can get a lower bound on $f(-z-a)$ by restricting the range of integration in~\eqref{inteqza}. Therefore,
\begin{equation}
f(-z-a) \geq 2 \int_{u=\frac{1}{2}-\frac{\epsilon}{2}}^{1/2}{\int_{y =-z}^{-z+\epsilon^2}\!f(y) f\!\left(\frac{-z-a-g(u)-(1-u)y}{u}\right)} \frac{1}{u}\,dy\,du.
\label{int_eq for -z-a}
\end{equation}

We claim that in this integral region, we have $\frac{-z-a-g(u)-(1-u)y}{u} \geq -z$, which is equivalent to $y+z \leq \frac{-a-g(u)}{1-u}$. Here is a proof.  Observe that when $\epsilon$ is small enough and 
$u \in [\frac{1}{2}-\frac{\epsilon}{2}, \frac{1}{2}]$, we have
\begin{align*}
\frac{-a-g(u)}{1-u} &\geq \frac{g\left(\frac{1}{2}-\epsilon\right)-g\left(\frac{1}{2} - \frac{\epsilon}{2}\right)}{\frac{1}{2}+\frac{\epsilon}{2}} \\ 
& \geq \frac{\frac{\epsilon}{2}\left|g'\!\left(\frac{1}{2}-\frac{\epsilon}{2}\right)\right|}{\frac{1}{2}+\frac{\epsilon}{2}} = \frac{\epsilon}{1+\epsilon}\left|2\ln \left(1-\frac{2\epsilon}{1+\epsilon}\right)\right|\\
& \geq \frac{4\epsilon^{2}}{\left(1+\epsilon\right)^{2}} \geq \epsilon^{2}.
\end{align*}
Also, in this integral region we have 
$y+z \leq \epsilon^{2}$. 
So we conclude that $y+z \leq \frac{-a-g(u)}{1-u}$. 

Next, we claim that $\frac{-z-a-g(u)-(1-u)y}{u} \leq 0$ in this integral region if $z$ is large enough.
Here is a proof. 
Let $\frac{-z-a-g(u)-(1-u)y}{u} = -z +\delta$ with $\delta \geq 0$.  Then in the integral region we have $0 \leq y+z = \frac{-a-g(u)-u\delta}{1-u}$. Therefore
\begin{align*}
\delta \leq \frac{-a-g(u)}{u} &\leq \frac{-a-g\left(\frac{1}{2}\right)}{\frac{1}{2}-\frac{\epsilon}{2}} = \frac{2}{1-\epsilon}\left[g\left(\frac{1}{2}-\epsilon\right)-g\left(\frac{1}{2}\right)\right]\\
&\leq \frac{2\epsilon}{1-\epsilon}\left|2\ln \left(1-\frac{4\epsilon}{1+2\epsilon}\right)\right|\\
&\leq 19 \epsilon^{2}, 
\end{align*}
where the last inequality can be verified to hold for $\epsilon < 1/10$. 
That means if we pick $z$ large enough, for example, $z \geq 20 \epsilon^2$, then $\frac{-z-a-g(u)-(1-u)y}{u} = -z + \delta$ will be negative.  It can also be verified that $a \geq 30 \epsilon^2$ for $\epsilon < 1/10$.  

Now consider $\epsilon < 1/10$, an integer $k \geq 3$, $z \in [(k-2)a, (k-1)a]$, and 
$x = z + a \in [(k-1)a, k a]$.  Noting $z \geq a \geq 30 \epsilon^2 > 20 \epsilon^2$,
by~(\ref{int_eq for -z-a}) we have 
\[
f(-x) \geq 2\cdot \frac{\epsilon}{2} \cdot m_z^2 \cdot \epsilon^{2}\cdot 2 \geq 2\epsilon^{3}m_{(k-1)a}^2.
\]
Further, for $x \in [0, (k-1)a]$ 
we have
\[
f\left(-x\right) \geq m_{\left(k-1\right)a} > 2\epsilon^{3}m_{\left(k-1\right)a}^{2}
\]
since $2 \epsilon^3 < 1$ and $m_{(k - 1) a} \leq 1$ by definition.
Combine these two facts, we can conclude that for $x \in \left[0, ka \right]$ we have $f\left(-x\right) \geq 2\epsilon^{3}m_{\left(k-1\right)a}^{2} $. This implies the recurrence inequality
\[
m_{ka} \geq 2\epsilon^{3}m_{\left(k-1\right)a}^{2}.
\]
The desired inequality follows by iterating:
\[
m_{ka} \geq \left(2\epsilon^{3} \right)^{2^{k-2}-1}m_{2a}^{2^{k-2}} \geq \left(2\epsilon^{3}\cdot m_{2a}\right)^{2^{k-2}}.
\]
\end{proof}

\section{Right Tail Lower Bound on~$f$}
\label{RL}

Once again we use an iterative approach to derive our right-tail lower bound on~$f$ in \refT{T:main1}.
The following key lemma is established from a recurrence inequality.  
Define
\[
c := 2 [F(1) - F(0)] \in (0, 2)
\]
and
\[
m_{z} := \min \limits_{x\in [0, z]} f(x), \quad z \geq 0. 
\]

\begin{lemma}
\label{recurrence relation of right tail lower bound}
Suppose $b \in [0, 1)$ and that $\delta \in (0, 1/2)$ is sufficiently small that $g(\delta) \geq b$.  Then for any integer $k \geq 1$ satisfying
\[
2 + (k - 1) b \leq [g(\delta) - b] / \delta
\]
we have
\[
m_{2 + k b} \geq (c \delta)^{k - 1} m_3.
\]
\end{lemma}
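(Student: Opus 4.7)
My plan is to argue by induction, establishing the one-step recurrence
\[
m_{2+kb} \geq c\delta \cdot m_{2+(k-1)b}
\]
for each $k \geq 2$ satisfying the hypothesis. Iterating gives $m_{2+kb} \geq (c\delta)^{k-1} m_{2+b}$, after which $m_{2+b} \geq m_3$ follows from monotonicity of $z \mapsto m_z$ since $b < 1$; the case $k = 1$ is handled by the same monotonicity. Because $c < 2$ and $\delta < 1/2$ force $c\delta < 1$, the recurrence reduces to proving the pointwise bound $f(x) \geq c\delta \cdot m_{2+(k-1)b}$ on the incremental slice $x \in [2+(k-1)b,\, 2+kb]$; on $[0, 2+(k-1)b]$ it is automatic.

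For this pointwise bound I would start from the integral equation~\eqref{f integral equation}. Two observations streamline the analysis: first, the conditional density $f_{Z\mid U=u}(x)$ is symmetric in~$u$ about $1/2$ (because $g(1-u) = g(u)$ and $Z \overset{\mathcal{L}}{=} Z^*$), yielding the factor-of-$2$ identity $f(x) = 2\int_0^{1/2} f_{Z\mid U=u}(x)\,du$; second, by the $U \leftrightarrow 1-U$ invariance of the fixed-point equation for~$Z$, $f_{Z\mid U=u}(x)$ admits an equivalent representation whose kernel denominator is $1-u$ rather than~$u$. Combining these gives
\[
f(x) = 2 \int_{u=0}^{1/2} \int_{z \in \mathbb{R}} f(z)\, f\!\left(\frac{x - g(u) - uz}{1-u}\right) \frac{1}{1-u}\, dz\, du,
\]
and I would restrict the outer integral to $u \in (0, \delta]$ and the inner one to $z \in [0, 1]$, writing $w' := (x - g(u) - uz)/(1-u)$. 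The point of using the alternative kernel is that the factor $1/(1-u)$ stays bounded as $u \downarrow 0$, so no divergence appears.

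The main obstacle is the algebraic verification that the hypothesis forces $w' \in [0,\, 2+(k-1)b]$ on this rectangle, so that $f(w') \geq m_{2+(k-1)b}$. The lower bound $w' \geq 0$ is easy since $g(u) + uz \leq 1 + \delta < 2 \leq x$. The upper bound $w' \leq 2+(k-1)b$ rearranges to $g(u) - u[2+(k-1)b] \geq b$ after taking the worst cases $z = 0$ (since $w'$ is decreasing in $z$) and $x = 2 + kb$. Because $g$ is decreasing on $(0, 1/2]$ while $u[2+(k-1)b]$ is increasing, the left side is decreasing in~$u$ on $(0, \delta]$ and hence minimized at $u = \delta$, reducing the condition to exactly the stated hypothesis $\delta[2+(k-1)b] \leq g(\delta) - b$. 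Once this is in hand, evaluating the restricted double integral via $\int_0^1 f(z)\,dz = c/2$ and $\int_0^{\delta} du/(1-u) = -\ln(1-\delta) \geq \delta$ delivers $f(x) \geq c\delta \cdot m_{2+(k-1)b}$, completing the recurrence.
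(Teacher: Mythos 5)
Your proof is correct and follows essentially the same route as the paper's: the same reduction to the one-step recurrence $m_{2+kb} \geq c\,\delta\, m_{2+(k-1)b}$, iterated and finished with $m_{2+b} \geq m_3$ via $b<1$. The only difference is in bookkeeping for the one-step bound: the paper keeps the $1/u$ kernel, bounds $f(y)\geq m_z$ for $y\in[0,z]$, and changes variables to extract the factor $F(1)-F(0)$, whereas you use the symmetric $1/(1-u)$ kernel, integrate $f(z)$ over $[0,1]$ to get $c/2$ directly, and bound the other factor by the running minimum after the same range verification --- both yield the identical constant $c\,\delta$.
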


We delay the proof of \refL{recurrence relation of right tail lower bound} in order to show next how the lemma leads us to the desired lower bound in \eqref{right} on the right tail of~$f$.

\begin{proposition}
As $x \to \infty$ we have
\[
f(x) \geq \exp[ - x \ln x - x \ln \ln x + O(x)].
\]
\end{proposition}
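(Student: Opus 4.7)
The plan is to deduce this proposition from \refL{recurrence relation of right tail lower bound}. For each large~$x$, we seek an integer $k \geq 1$ and parameters $b \in [0, 1)$, $\delta \in (0, 1/2)$ satisfying the hypotheses of that lemma and such that $2 + k b \geq x$. Since $m_z$ is non-increasing in~$z$ and $f(x) \geq m_x$, this will yield $f(x) \geq m_{2 + kb} \geq (c \delta)^{k - 1} m_3$, so that
\[
\ln f(x) \geq (k - 1) [\ln c + \ln \delta] + \ln m_3.
\]
To make this at least $-x \ln x - x \ln \ln x + O(x)$, note that $k$ will be of order $x/b$, so $\ln \delta$ should be of order $- \ln x - \ln \ln x$, i.e., we want $\delta$ of order $1/(x \ln x)$.

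Concretely, I propose $\delta := 1/(x \ln x)$, $b := 1 - C/\ln x$ for a sufficiently large absolute constant $C > 1$, and $k := \lceil (x - 2)/b \rceil$, so that $2 + kb \geq x$ and $k = x + O(x/\ln x)$. At our chosen scale, the elementary expansion
\[
g(\delta) = 1 + 2\delta \ln \delta + 2(1 - \delta) \ln(1 - \delta) = 1 - \frac{2}{x} + o\Bigl(\frac{1}{x}\Bigr)
\]
shows both that $g(\delta) \geq b$ for $x$ large (since $2/x \ll C/\ln x$) and that
\[
\frac{g(\delta) - b}{\delta} = \Bigl(\frac{C}{\ln x} - \frac{2}{x} + o(\cdot)\Bigr) x \ln x = C x - 2 \ln x + o(x),
\]
which comfortably dominates $2 + (k - 1) b \leq x + O(1)$ provided $C > 1$. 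Applying \refL{recurrence relation of right tail lower bound} and taking logarithms,
\[
(k - 1) \ln(c \delta) = [x + O(x/\ln x)] \cdot [\ln c - \ln x - \ln \ln x] = -x \ln x - x \ln \ln x + O(x),
\]
and since $\ln m_3 = O(1)$, the desired bound follows.

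The main obstacle is the balancing act in choosing $\delta$ and $b$: to prevent $(c \delta)^{k - 1}$ from being too small one wants $\delta$ as large as possible, but the constraint $2 + (k - 1) b \leq [g(\delta) - b]/\delta$ forces $\delta \lesssim (1 - b)/x$, so $b$ must then be close to~$1$, and this in turn inflates $k$ through $k \approx x/b$. These two pressures balance precisely at $\delta \sim 1/(x \ln x)$ and $1 - b \sim 1/\ln x$, which is what produces the $-x \ln \ln x$ term in the lower bound on $\ln f(x)$.
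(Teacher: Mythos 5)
Your proposal is correct and follows essentially the same route as the paper: the same choice $\delta = 1/(x\ln x)$, $b = 1 - \Theta(1/\ln x)$, $k = \lceil (x-2)/b\rceil$, verification of the hypotheses of \refL{recurrence relation of right tail lower bound}, and the same final logarithmic computation (the paper takes $C = 2$ and uses the elementary bound $g(\delta) \geq 1 + 3\delta\ln\delta$ in place of your exact expansion of $g$, but these are immaterial differences). No gaps.
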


\begin{proof}
Given $x \geq 3$ suitably large, we will show next that we can apply \refL{recurrence relation of right tail lower bound} for suitably chosen $b > 0$ and~$\delta$ and $k = \lceil (x - 2) / b \rceil \geq 2$.  Then, by the lemma, 
\begin{equation}
\label{fxLB}
f(x) \geq m_{2 + k b} \geq (c \delta)^{k - 1} m_3 \geq (c \delta)^{(x - 2) / b} m_3,
\end{equation}
and we will use~\eqref{fxLB} to establish the proposition.

We make the same choices of~$\delta$ and~$b$ as in~\cite[Sec.~4]{janson2015tails}, namely, 
$\delta = 1 / (x \ln x)$ and $b = 1 - (2 / \ln x)$.  To apply \refL{recurrence relation of right tail lower bound}, we need to check that $g(\delta) \geq b$ and $2 + (k - 1) b \leq [g(\delta) - b]/\delta$, for the latter of which it is sufficient that $x \leq [g(\delta) - b]/\delta$.  Indeed, if~$x$ is sufficiently large, then
\[
g(\delta) \geq 1 + 3 \delta \ln \delta = 1 - \mbox{$\frac{3}{x \ln x}$} (\ln x + \ln \ln x) 
\geq 1 - \mbox{$\frac{4}{x}$},
\]
where the elementary first inequality is (4.1) in~\cite{janson2015tails}, and so  
\[
g(\delta) - b \geq \mbox{$\frac{2}{\ln x}$} - \mbox{$\frac{4}{x}$} \geq \mbox{$\frac{1}{\ln x}$} > 0
\]
and
\[
\frac{g(\delta) - b}{\delta} \geq \frac{1 / \ln x}{1 / (x \ln x)} = x.
\]

Finally, we use~\eqref{fxLB} to establish the proposition.  Indeed,
\begin{align*}
- \ln f(x) 
&\leq \mbox{$\frac{x - 2}{b}$} \ln(\mbox{$\frac{1}{c \delta}$}) - \ln m_3 \\
&\leq \mbox{$\frac{x}{1 - (2 / \ln x)}$} [\ln(x \ln x) + \ln(\mbox{$\frac{1}{c}$})] - \ln m_3 \\
&= \mbox{$\frac{x}{1 - (2 / \ln x)}$} \ln(x \ln x) + O(x).
\end{align*}
But
\begin{align*}
\lefteqn{\hspace{-.5in}\frac{x}{1 - (2 / \ln x)} \ln(x \ln x)} \\
&= x \left[ 1 + \frac{2}{\ln x} + O\left(\frac{1}{(\log x)^2}\right) \right] (\ln x + \ln \ln x) \\
&= (x \ln x) \left[ 1 + \frac{2}{\ln x} + O\left(\frac{1}{(\log x)^2}\right) \right] 
  \left( 1 + \frac{\ln \ln x}{\ln x} \right) \\
&= (x \ln x) \left[ 1 +  \frac{\ln \ln x}{\ln x} + \frac{2}{\ln x} + \frac{2 \ln \ln x}{(\ln x)^2} 
  + O\left(\frac{1}{(\log x)^2}\right) \right] \\
&= x \ln x +  x \ln \ln x + 2 x + \frac{2 x \ln \ln x}{\ln x} + O\left(\frac{x}{\log x}\right) \\
&= x \ln x +  x \ln \ln x + O(x).
\end{align*}
So
\[
- \ln f(x) \leq x \ln x +  x \ln \ln x + O(x),
\]
as claimed.
\end{proof}

Now we go back to prove Lemma \ref{recurrence relation of right tail lower bound}, but first we need two preparatory results.

\begin{lemma}
\label{fzb lower bound}
Suppose $z \geq 2$, $b \geq 0$, and $\delta \in (0, 1/2)$ satisfy $g(\delta) \geq b$ and 
$z \leq [g(\delta) - b] / \delta$.  Then
\[
f(z + b) \geq c\,\delta\,m_z.
\]
\end{lemma}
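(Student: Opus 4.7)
My plan is to lower-bound $f(z+b)$ by applying the integral equation~\eqref{f integral equation} and restricting the double integral to a small region on which one $f$-factor is uniformly controlled below by $m_z$ while the other integrates to a multiple of $c$.

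First, I would rewrite~\eqref{f integral equation} in the equivalent form
\[
f(x) = \int_0^1 \int_{\bbR} f(y)\,\frac{1}{1-u}\,f\!\biggl(\frac{x - g(u) - u y}{1-u}\biggr) dy\,du,
\]
obtainable from~\eqref{f integral equation} by a change of variable in the inner integral (equivalently, by swapping the two i.i.d.\ copies of $Z$ in the underlying distributional identity $Z = UZ_1 + (1-U)Z_2 + g(U)$). Setting $x = z+b$, I restrict the outer integration to $u \in [0, \delta]$ and the inner to a bounded $y$-window; the simplest candidate is $y \in [0, 1]$, on which $\int f$ equals $c/2$.

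The heart of the proof is verifying that on this region the inner argument $w := (z+b - g(u) - uy)/(1-u)$ lies in $[0, z]$, so that $f(w) \geq m_z$. The upper bound $w \leq z$ rearranges to $u(z-y) \leq g(u) - b$: because $g'(u) = 2\ln[u/(1-u)] < 0$ on $(0, 1/2)$, we have $g(u) \geq g(\delta)$ throughout $[0, \delta]$, and therefore
\[
u(z-y) \;\leq\; uz \;\leq\; \delta z \;\leq\; g(\delta) - b \;\leq\; g(u) - b,
\]
directly invoking the hypothesis. The lower bound $w \geq 0$ rearranges to $g(u) + uy \leq z + b$, which follows immediately from $g(u) \leq g(0) = 1$, $uy \leq \delta \leq 1$, and $z \geq 2$.

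With $w \in [0, z]$ on the restricted region, using $1/(1-u) \geq 1$ on $[0, \delta]$ gives
\[
f(z+b) \;\geq\; m_z \int_0^\delta \int_0^1 f(y)\,\frac{1}{1-u}\,dy\,du \;\geq\; m_z\,[F(1)-F(0)]\,\delta \;=\; \tfrac{c}{2}\,\delta\,m_z.
\]
To capture the sharper constant $c$ stated in the lemma, I would enlarge the inner $y$-window to $[0, Y]$ with $Y := (z+b-g(\delta))/\delta \geq 1$; a brief calculus check (the derivative of $u \mapsto (z+b-g(u))/u$ equals $[2\ln(1-u) + 1 - (z+b)]/u^2 < 0$ for $z+b > 1$) shows this larger window keeps $w \geq 0$, and the additional mass suffices to obtain $\int_0^Y f \geq c$. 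The principal technical obstacle is thus the containment $w \in [0, z]$ under the sharp hypothesis, which rests on the two monotonicity facts for $g$ on $[0, 1/2]$ and for the quotient $u \mapsto (z+b-g(u))/u$.
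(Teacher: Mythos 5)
Your argument is essentially the mirror image of the paper's proof, and its core is correct. The paper keeps the kernel in the form $f(y)\,f\bigl(\frac{z+b-g(u)-(1-u)y}{u}\bigr)\frac{1}{u}$, restricts to $u\in(0,\delta)$ and $y\in(0,z)$, bounds the factor $f(y)$ below by $m_z$, and then changes variables in the inner integral, checking that the image interval contains $(0,z-1)\supseteq(0,1)$ so that the inner integral contributes at least $F(1)-F(0)$. You instead pre-apply the substitution $u\mapsto 1-u$ (valid since $g(1-u)=g(u)$), bound the \emph{transformed} factor by $m_z$ via the containment $w\in[0,z]$, and integrate $f(y)$ over $[0,1]$ directly; your verification of $w\in[0,z]$ on $[0,\delta]\times[0,1]$ is correct and invokes both hypotheses exactly where they are needed. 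The two computations are equivalent up to which copy of $Z$ plays which role.

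The genuine gap is the factor of $2$. Your restricted integral yields only $f(z+b)\ge\frac{c}{2}\,\delta\,m_z$, and the repair you propose --- enlarging the $y$-window to $[0,Y]$ and asserting $\int_0^Y f\ge c$ --- cannot work: since $\int_0^Y f\le 1-F(0)$ while $c=2[F(1)-F(0)]$, your assertion would force $\P(Z>1)\ge\P(0<Z<1)$, a numerical property of the limiting {\tt QuickSort} law that you have not established and that numerical evaluations of~$f$ (which is concentrated near the origin, with standard deviation about $0.65$) contradict. The factor of $2$ actually comes from the symmetry of the integral equation about $u=1/2$: for fixed $u$ the inner integral in \eqref{f integral equation} is the density at~$x$ of $uZ_1+(1-u)Z_2+g(u)$ with $Z_1,Z_2$ i.i.d., hence is unchanged under $u\mapsto 1-u$ by exchangeability, so one may write $f(x)=2\int_0^{1/2}(\cdots)\,du$ and your bound doubles --- equivalently, in your form of the equation the region $u\in[1-\delta,1]$ contributes the same amount as $u\in[0,\delta]$. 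With that observation your proof does give the stated $c\,\delta\,m_z$. (For the downstream application the lost constant would in any case be absorbed into the $O(x)$ term, but as a proof of the lemma as stated your write-up falls short by this factor, and the specific fix you offer is incorrect.)
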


\begin{proof}
By the integral equation~\eqref{f integral equation} satisfied by~$f$ (and symmetry in~$u$ about $u = 1/2$), for arbitrary~$z$ and~$b$ we have
\[
f(z+b) 
= 2 \int_{u=0}^{1/2}{\int_{y \in \mathbb{R}} f(y) f\left(\frac{z+b-g(u)-(1-u)y}{u}\right)}\frac{1}{u}\,dy\,du.
\]
Since $f$ is positive everywhere, a lower bound on $f\left(z+b\right)$ can be achieved by shrinking the region of integration:
\begin{align}
f\left(z+b\right) 
&\geq 2 \int_{u=0}^{\delta}{\int_{y=0}^{z} f(y) f\left(\frac{z+b-g(u)-(1-u)y}{u}\right)}\frac{1}{u}\,dy\,du \nonumber\\
&\geq 2m_{z} \int_{u=0}^{\delta}{\int_{y=0}^{z} f\left(\frac{z+b-g(u)-(1-u)y}{u}\right)}\frac{1}{u}\,dy\,du \nonumber\\
& = 2m_{z} \int_{u=0}^{\delta}{\int_{\xi=z+\frac{b-g(u)}{u}}^{\frac{z + b - g(u)}{u}}
f(\xi)}\frac{1}{1-u}\, d\xi\, du.
\end{align}

The equality comes from a change of variables.  We next claim that the integral of integration 
for~$\xi$ contains $(0, z - 1)$, and then the desired result follows.  Indeed, if $u \in (0, \delta)$ and $\xi \in (0, z - 1)$ then
\[
\xi < z - 1 < \mbox{$\frac{z - 1}{u}$} \leq \mbox{$\frac{z + b - g(u)}{u}$}, 
\]  
where the last inequality holds because $b \geq 0$ and $g(u) \leq 1$; and, because 
$g(u) \geq g(\delta)$ and $g(\delta) \geq b$ and $z \leq [g(\delta) - b] / \delta$, we have
\begin{align*}
\xi 
&> 0 = z + \mbox{$\frac{b - g(u)}{u}$} - [z + \mbox{$\frac{b - g(u)}{u}$}]
\geq z + \mbox{$\frac{b - g(u)}{u}$} - [z + \mbox{$\frac{b - g(\delta)}{u}$}] \\
&\geq z + \mbox{$\frac{b - g(u)}{u}$} - [z + \mbox{$\frac{b - g(\delta)}{\delta}$}]
\geq z + \mbox{$\frac{b - g(u)}{u}$}.
\end{align*}
\end{proof}

\begin{lemma}
\label{m lower bound}
Suppose $b \geq 0$ and that $\delta \in (0, 1/2)$ is sufficiently small that $g(\delta) \geq b$.  Then for any integer $k \geq 2$ satisfying
\[
2 + (k - 1) b \leq [g(\delta) - b] / \delta
\]
we have
\[
m_{2 + k b} \geq c\,\delta\,m_{2 + (k - 1) b}.
\]
\end{lemma}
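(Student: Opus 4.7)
The plan is to deduce \refL{m lower bound} from \refL{fzb lower bound} by a straightforward case split. Since $m_{2+kb}$ is by definition the infimum of~$f$ on $[0, 2+kb]$, it suffices to verify the pointwise inequality $f(x) \geq c\,\delta\,m_{2+(k-1)b}$ for every $x \in [0, 2+kb]$, and I would partition this interval at the cut point $2+(k-1)b$.

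On the left piece $x \in [0, 2+(k-1)b]$, the definition of~$m$ gives $f(x) \geq m_{2+(k-1)b}$ at once; and since $c \in (0, 2)$ and $\delta \in (0, 1/2)$ force $c\delta < 1$, the trivial estimate $m_{2+(k-1)b} \geq c\delta\,m_{2+(k-1)b}$ supplies the desired bound. On the right piece $x \in [2+(k-1)b, 2+kb]$, I would write $x = z + b$ with $z := x - b \in [2+(k-2)b, 2+(k-1)b]$ and apply \refL{fzb lower bound}. The only thing to check is that the hypotheses of that lemma hold: from $k \geq 2$ and $b \geq 0$ we get $z \geq 2+(k-2)b \geq 2$, and from the standing assumption $2+(k-1)b \leq [g(\delta)-b]/\delta$ together with $z \leq 2+(k-1)b$ we get $z \leq [g(\delta)-b]/\delta$. \refL{fzb lower bound} then yields $f(x) = f(z+b) \geq c\,\delta\,m_z$, and since $z \mapsto m_z$ is non-increasing and $z \leq 2+(k-1)b$, we obtain $f(x) \geq c\,\delta\,m_{2+(k-1)b}$, as needed.

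No real obstacle is expected: the argument is essentially routine bookkeeping of range constraints, and the modest inequality $c\delta < 1$ that makes the ``left piece'' trivial is immediate from the given restrictions on~$c$ and~$\delta$. The only mildly subtle point is recognizing that the constraint $z \leq [g(\delta)-b]/\delta$ in \refL{fzb lower bound} is what dictates the upper limit $2 + (k-1)b \leq [g(\delta)-b]/\delta$ in the statement of \refL{m lower bound}, so that iterating this recursion later (to derive \refL{recurrence relation of right tail lower bound}) will be compatible with the choice of~$\delta$ and~$b$ in the subsequent application.
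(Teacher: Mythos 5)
Your proof is correct and follows essentially the same route as the paper's: a case split at $2+(k-1)b$, with the trivial bound $c\delta<1$ on the lower piece and an application of Lemma~\ref{fzb lower bound} with $z=x-b$ on the upper piece. Your explicit verification of the hypotheses of Lemma~\ref{fzb lower bound} is a welcome (if routine) addition that the paper leaves implicit.
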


\begin{proof}
For $y \in [2 + (k - 1)b, 2 + k b]$, application of \refL{fzb lower bound} with $z = y - b$ yields
\[
f(y) \geq c\,\delta\,m_{y - b} \geq c\,\delta\,m_{2 + (k - 1) b}.
\]
Also, for $y \in [0, 2 + (k - 1) b]$ we certainly have 
\[
f(y) \geq m_{2 + (k - 1) b} > c\,\delta\,m_{2 + (k - 1) b}.
\]
The result follows.
\end{proof}

We are now ready to complete this section by proving \refL{recurrence relation of right tail lower bound}. 

\begin{proof}[Proof of \refL{recurrence relation of right tail lower bound}]
By iterating the recurrence inequality of \refL{m lower bound}, it follows that
\[
m_{2 + k b} \geq (c\,\delta)^{k - 1} m_{2 + b}.
\]
\refL{recurrence relation of right tail lower bound} then follows since $b < 1$.
\end{proof}

\section{Left Tail Bounds for Tail Suprema of Absolute Derivatives}
\label{HL}
From \refS{LL} (respectively, \refS{RL}) we know the left-tail lower bound of~\eqref{left} [resp.,\ the right-tail lower bound of~\eqref{right}].
 In this section we establish the left-tail bounds of~\eqref{left} and~\eqref{kleft}, and in the next section we do the same for right tails.

\subsection{Lower bounds}
\label{HLL}
As discussed in \refR{R:nonrigorous}(a), in light of the main theorem of Janson~\cite{janson2015tails} and our \refS{LL}, to finish our treatment of left-tail lower bounds we need only prove the lower bound in~\eqref{kleft} for fixed $k \geq 2$.  For that, choose any~$x$ and apply the Landau--Kolmogorov \refL{L:LK}, bounding the function $\Fu'(\cdot) = -f(-\cdot)$ in terms of the functions $\Fu$ and $\Fu^{(k)}$.  This gives  
\[
f(-x) \leq \|\Fu'\|_x \leq c_{k, 1}\,\|\Fu\|_x^{(k - 1) / k}\,\|\Fu^{(k)}\|_x^{1 / k},
\]
i.e.,
\[
\|\Fu^{(k)}\|_x \geq c_{k, 1}^{-k}\,\|\Fu\|_x^{- (k - 1)}\,[f(-x)]^k.
\]
But recall 
\[
c_{k, 1} \leq e^2 k / 4, \quad \|\Fu\|_x \leq \exp\left[-e^{\gamma x + O(1)}\right], \quad 
f(-x) \geq \exp\left[-e^{\gamma x + \ln \ln x + O(1)}\right].  
\] 
Plugging in these bounds, we obtain the desired result.
  
\subsection{Upper bounds}
\label{HLU}
The left-tail upper bounds in~\eqref{kleft} of~\refT{T:main2} can be written in the equivalent form 
\begin{equation}
\label{kleftequiv}
\lambda_k := \limsup_{x \to \infty} \left[ \gamma x - \ln \left( -\ln \|\Fu^{(k)}\|_x \right) \right] < \infty;
\end{equation}
note also that the left-tail upper bound in~\eqref{left} of~\refT{T:main1} follows from $\lambda_1 < \infty$.  As discussed in \refR{R:nonrigorous}(a), \eqref{kleftequiv} is known for $k = 0$ from Janson~\cite{janson2015tails}.  So to finish our treatment of left-tail upper bounds in Theorems~\ref{T:main1}--\ref{T:main2} we need only prove~\eqref{kleftequiv} for $k \geq 1$.

In this subsection we prove the following stronger Proposition~\ref{P:HLUprop}, which implies that 
$\lambda_k$ is non-increasing in $k \geq 0$ and therefore that $\lambda_k < \infty$ for every~$k$.  In preparation for the proof, see the definition of $\mu_j$ in~\eqref{limsupdiffL} and note that if $\mu_j \leq 0$ for $j = 0, \dots, k - 1$, then $\lambda_j$ is non-increasing for $j = 0, \dots, k$; in particular, \eqref{kleftequiv} then holds.

\begin{proposition}
\label{P:HLUprop}
For each fixed $k \geq 0$ we have
\begin{equation}
\label{limsupdiffL}
\mu_k := \limsup_{x \to \infty} \left[ - \ln \left( -\ln \|\Fu^{(k + 1)}\|_x \right) 
+ \ln \left( -\ln \|\Fu^{(k)}\|_x \right) \right] \leq 0.
\end{equation}
\end{proposition}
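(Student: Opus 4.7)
The plan is to reduce the statement to a careful application of the Landau--Kolmogorov inequality (\refL{L:LK}), wedging the unknown quantity $\|\Fu^{(k+1)}\|_x$ between its lower-order neighbor $\|\Fu^{(k)}\|_x$ and a much higher-order tail supremum $\|\Fu^{(k+n)}\|_x$, the latter being absorbed into a universal constant via \refP{P:fkbound}. Taking~$n$ large will push the exponent $1-1/n$ on $\|\Fu^{(k)}\|_x$ arbitrarily close to~$1$, and letting $n \to \infty$ at the end of the argument will then deliver $\mu_k \leq 0$.

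First I would apply \refL{L:LK} to $h := \Fu^{(k)}$ with lower index~$1$ and upper index $n \geq 2$, obtaining
\[
\|\Fu^{(k+1)}\|_x \leq c_{n, 1}\,\|\Fu^{(k)}\|_x^{1 - 1/n}\,\|\Fu^{(k + n)}\|_x^{1/n}.
\]
Since $\Fu^{(j)}(t) = (-1)^j f^{(j - 1)}(-t)$ for $j \geq 1$, the right-most factor is dominated uniformly in~$x$ by $M := 2^{(k + n - 1)^2 + 10(k + n - 1) + 17}$ courtesy of \refP{P:fkbound}. Writing $A(x) := -\ln \|\Fu^{(k)}\|_x$ and $B(x) := -\ln \|\Fu^{(k+1)}\|_x$, taking logarithms then gives
\[
B(x) \geq (1 - 1/n)\,A(x) - D_{n, k}, \qquad D_{n, k} := \ln c_{n, 1} + (1 / n) \ln M.
\]

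Next, I would cite the superpolynomial decay of $f^{(k-1)}$ from~\cite{fill2000smoothness} (or, in the edge case $k = 0$, the decay of $F(-\cdot)$ from~\cite{janson2015tails}) to conclude that $A(x) \to \infty$ as $\xtoo$. Hence, for all sufficiently large~$x$, the right-hand side above is positive and
\[
\ln A(x) - \ln B(x) \leq \ln \frac{A(x)}{(1 - 1/n)\,A(x) - D_{n, k}} \longrightarrow -\ln(1 - 1/n) \quad \mbox{as } \xtoo.
\]
Taking the limit superior in~$x$ gives $\mu_k \leq -\ln(1 - 1/n)$, and since $n \geq 2$ was arbitrary, sending $n \to \infty$ yields $\mu_k \leq 0$.

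The main obstacle here is genuinely just bookkeeping rather than any deep new idea: \refL{L:LK} is tailored precisely to the tail-supremum norm $\|\cdot\|_x$ appearing in \refT{T:main2}, and \refP{P:fkbound} supplies exactly the universal derivative bound needed to neutralize the $1/n$-exponent on $\|\Fu^{(k+n)}\|_x$. Once these two ingredients are assembled, the remainder is a short chase through logarithms.
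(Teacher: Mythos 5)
Your proof is correct, and its two main ingredients --- the Landau--Kolmogorov wedge from \refL{L:LK} with the top factor $\|\Fu^{(k+n)}\|_x$ absorbed via \refP{P:fkbound} --- are exactly those of the paper's proof. The difference lies in how the limits are taken, and your version is a genuine (if modest) simplification. The paper couples $n$ to $x$, letting $n(x)\to\infty$ with $n(x)=o(e^{\gamma x})$ inside a single $\limsup$; to know that the error term $\bigl(2-\ln 4+\ln n+n^{-1}\ln a_{n,k}\bigr)/\bigl(-\ln\|\Fu^{(k)}\|_x\bigr)$ vanishes, it invokes the doubly-exponential decay of $\|\Fu^{(k)}\|_x$, which it obtains by induction on~$k$ with Janson's bound $\lambda_0<\infty$ as the base case. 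You instead keep $n$ fixed, take $\limsup_{x\to\infty}$ to get $\mu_k\le-\ln(1-1/n)$, and only then send $n\to\infty$; this decoupling means you need nothing more than $-\ln\|\Fu^{(k)}\|_x\to\infty$, which follows from the superpolynomial decay already in \cite{fill2000smoothness}, so the induction on~$k$ (and the reliance on Janson's result inside this proposition) disappears entirely. What the paper's coupled choice of $n(x)$ buys in exchange is a quantitative rate: it shows the argument of the $\limsup$ is bounded by $-\ln[1-o(1)]$ with an explicit error, which is in the spirit of the conjecture (stated in the remark following the proposition) that $\mu_k$ is a vanishing limit; your route delivers the stated inequality $\mu_k\le 0$ just as rigorously but with no rate.
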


\begin{proof}
We proceed by induction on~$k$.
Choosing any~$x$
and applying the Landau--Kolmogorov inequality \refL{L:LK} to the function $h = \Fu^{(k)}$, we find for $n \geq 2$ that
\[
\|\Fu^{(k + 1)}\|_x 
\leq \mbox{$\frac{1}{4}$} e^2 n\,\|\Fu^{(k)}\|_x^{1 - (1 / n)}\,\|\Fu^{(k + n)}\|_x^{1 / n}.
\]
We can bound the norm $\|\Fu^{(k + n)}\|_x$ using Proposition~\ref{P:fkbound} simply by 
\begin{equation}
\label{factor3}
a_{n, k} := 2^{(k + n - 1)^2 + 10 (k + n - 1) + 17}.
\end{equation}
Thus the argument of the $\limsup$ in~\eqref{limsupdiffL} can be bounded above by
\[
- \ln \left[1 - \frac{1}{n} - \frac{2 - \ln 4 + \ln n + n^{-1} \ln a_{n, k}}{- \ln \|\Fu^{(k)}\|_x} \right].
\]
By Janson's bound giving $\lambda_0 < \infty$ if $k = 0$ and by induction on~$k$ if $k \geq 1$, we know that~\eqref{kleftequiv} holds.
Thus, letting $n \equiv n(x) \to \infty$ with $n(x) = o(e^{\gamma x})$, the claimed inequality follows.
\end{proof}

\begin{remark}
According to \refR{R:nonrigorous}, it is natural to conjecture that for every~$k$ the $\limsup$ in~\eqref{kleftequiv} is a limit and equals $- c_3$ and hence the $\limsup$ in~\eqref{limsupdiffL} is a vanishing limit. 
\end{remark}

\section{Right Tail Bounds for Tail Suprema of Absolute Derivatives}
\label{HR}

In this section we establish the right-tail bounds of~\eqref{right} and~\eqref{kright}.

\subsection{Lower bounds}
\label{HRL}

As discussed in \refR{R:nonrigorous}(a), in light of the main theorem of \cite{janson2015tails} and our \refS{RL}, to finish our treatment of right-tail lower bounds we need only prove the lower bound in~\eqref{kright} for fixed $k \geq 2$.  For that, proceed using the Landau--Kolmogorov \refL{L:LK} as in \refS{HLL} to obtain 
\[
\|\Fbar^{(k)}\|_x \geq c_{k, 1}^{-k}\,\|\Fbar\|_x^{- (k - 1)}\,[f(x)]^k.
\]
But recall 
\begin{align*}
c_{k, 1} &\leq e^2 k / 4, \quad 
\|\Fbar\|_x \leq \exp[ - x \ln x + O(x)], \\ 
f(x) &\geq \exp\left[ - x \ln x - x \ln \ln x + O(x) \right].  
\end{align*}
Plugging in these bounds, we obtain the desired result.

\subsection{Upper bounds}
\label{HRU}

The right-tail upper bounds in~\eqref{kright} of~\refT{T:main2} can be written in the equivalent form
\begin{equation}
\label{krightequiv}
\rho_k := \limsup_{x \to \infty}\,x^{-1} \left( x \ln x + \ln \|\Fbar^{(k)}\|_x \right) < \infty;
\end{equation}
note also that the right-tail upper bound in~\eqref{right} of~\refT{T:main1} follows from 
$\rho_1 < \infty$.  As discussed in \refR{R:nonrigorous}(a), \eqref{krightequiv} is known for $k = 0$ from Janson~\cite{janson2015tails}.  So to finish our treatment of right-tail upper bounds in Theorems~\ref{T:main1}--\ref{T:main2} we need only prove~\eqref{krightequiv} for $k \geq 1$.

In this subsection we prove the next stronger Proposition~\ref{P:HRUprop}, a right-tail analogue of Proposition~\ref{P:HLUprop}, and it then follows by choosing $r(x) \equiv x$ that $\rho_k$ is non-increasing in $k \geq 0$ and therefore that $\rho_k < \infty$ for every~$k$.  

\begin{proposition}
\label{P:HRUprop}
Let~$r$ be a function satisfying $r(x) = \omega(\sqrt{x \log x})$ as $x \to \infty$.  Then for each fixed 
$k \geq 0$ we have
\begin{equation}
\label{limsupdiffR}
\sigma_k := \limsup_{x \to \infty}\,r(x)^{-1} \left( \ln \|\Fbar^{(k + 1)}\|_x - \ln \|\Fbar^{(k)}\|_x \right) \leq 0.
\end{equation}
\end{proposition}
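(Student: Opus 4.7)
The plan is to prove \refP{P:HRUprop} by induction on~$k$, in direct parallel with the proof of \refP{P:HLUprop}. The base case $k = 0$ is Janson's main theorem, which gives $\rho_0 < \infty$ and hence $- \ln \|\Fbar\|_x = x \ln x + O(x)$. For the inductive step, the induction hypothesis asserts $\sigma_j \leq 0$ for every $j < k$ and every admissible~$r$; specializing the hypothesis to $r(x) = x$ yields $\rho_{j+1} \leq \rho_j$ for each $j < k$, so $\rho_k \leq \rho_0 < \infty$, equivalently $- \ln \|\Fbar^{(k)}\|_x = x \ln x + O(x)$.

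Next I apply the Landau--Kolmogorov inequality \refL{L:LK} to $h = \Fbar^{(k)}$: for any integer $n \geq 2$ and any~$x$,
\[
\|\Fbar^{(k+1)}\|_x \leq \tfrac{1}{4} e^2 n\,\|\Fbar^{(k)}\|_x^{1 - 1/n}\,\|\Fbar^{(k+n)}\|_x^{1/n}.
\]
Using \refP{P:fkbound} to bound $\|\Fbar^{(k+n)}\|_x = \|f^{(k+n-1)}\|_x \leq a_{n,k}$ with $a_{n,k}$ as in~\eqref{factor3}, so that $\ln a_{n,k} = O(n^2)$ for fixed~$k$, and taking logarithms, the argument of the $\limsup$ in~\eqref{limsupdiffR} is bounded above by
\[
r(x)^{-1}\left[\ln(\tfrac{1}{4} e^2 n) + \tfrac{1}{n}\bigl(- \ln \|\Fbar^{(k)}\|_x\bigr) + \tfrac{1}{n}\ln a_{n,k}\right],
\]
which by the inductive bound is $r(x)^{-1}\bigl[O(\ln n) + \tfrac{x \ln x}{n}(1 + o(1)) + O(n)\bigr]$.

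Finally I choose $n \equiv n(x) \to \infty$ balancing the two principal contributions, for instance $n(x) = \lceil \sqrt{x \ln x}\,\rceil$. The hypothesis $r(x) = \omega(\sqrt{x \log x})$ then forces $n(x) = o(r(x))$ and simultaneously $x \ln x / n(x) = o(r(x))$, so every term in the bracket is $o(r(x))$, yielding $\sigma_k \leq 0$ and closing the induction. The only real subtlety is precisely this balance: enlarging~$n$ shrinks the $x \ln x / n$ term but inflates the $O(n)$ term arising from $n^{-1}\ln a_{n,k}$, and the requirement that both be $o(r(x))$ is exactly equivalent to $r(x)^2 = \omega(x \log x)$, so the hypothesis on~$r$ is tight for this method.
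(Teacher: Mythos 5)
Your overall mechanics match the paper's: apply the Landau--Kolmogorov inequality to $h = \Fbar^{(k)}$, bound $\|\Fbar^{(k+n)}\|_x$ by the uniform constant $a_{n,k}$ from \eqref{factor3} with $\ln a_{n,k} = O(n^2)$, and choose $n(x) \asymp \sqrt{x\ln x}$ to balance the $x\ln x/n$ and $O(n)$ terms; your closing observation that this balance is exactly why $r(x) = \omega(\sqrt{x\log x})$ is required is also correct. However, there is a genuine gap in how you justify the key estimate $-\ln\|\Fbar^{(k)}\|_x = (1+o(1))\,x\ln x$. Your induction hypothesis $\sigma_j \le 0$ (with $r(x)=x$) yields $\rho_k \le \rho_0 < \infty$, but $\rho_k < \infty$ is precisely the statement $\|\Fbar^{(k)}\|_x \le \exp[-x\ln x + O(x)]$, i.e.\ $-\ln\|\Fbar^{(k)}\|_x \ge x\ln x - O(x)$. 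It is \emph{not} "equivalently $-\ln\|\Fbar^{(k)}\|_x = x\ln x + O(x)$": the direction you actually need to control the term $\tfrac{1}{n}(-\ln\|\Fbar^{(k)}\|_x)$ is an \emph{upper} bound on $-\ln\|\Fbar^{(k)}\|_x$, i.e.\ a \emph{lower} bound on $\|\Fbar^{(k)}\|_x$, and neither $\sigma_j \le 0$ nor $\rho_k < \infty$ provides that ($\sigma_j \le 0$ only bounds $\|\Fbar^{(j+1)}\|_x/\|\Fbar^{(j)}\|_x$ from above).

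This is exactly the asymmetry between the two tails: in the left-tail Proposition~\ref{P:HLUprop} the quantity $-\ln\|\Fu^{(k)}\|_x$ sits in a denominator, so the induction's output ($\lambda_k < \infty$, a lower bound on $-\ln\|\Fu^{(k)}\|_x$) is precisely what is needed, and induction closes. On the right tail the paper does \emph{not} induct; instead it invokes the independently established right-tail lower bound of \eqref{kright}, $\|\Fbar^{(k)}\|_x \ge \exp[-x\ln x - (k\vee 1)\,x\ln\ln x + O(x)]$, proved in Subsection~\ref{HRL} by a reverse application of Landau--Kolmogorov (bounding $f(x) \le \|\Fbar'\|_x \le c_{k,1}\|\Fbar\|_x^{(k-1)/k}\|\Fbar^{(k)}\|_x^{1/k}$ and using the lower bound on $f(x)$ from Section~\ref{RL} together with Janson's upper bound on $\Fbar$). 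Your proof is repaired by replacing the inductive appeal with a citation of that lower bound; as written, the step "equivalently $-\ln\|\Fbar^{(k)}\|_x = x\ln x + O(x)$" is false and the argument does not go through.
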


\begin{proof}
Proceeding as in the proof of Proposition~\ref{P:HLUprop}, for any~$x$ and any 
$n \geq 2$ we have
\[
\|\Fbar^{(k + 1)}\|_x 
\leq \mbox{$\frac{1}{4}$} e^2 n\,\|\Fbar^{(k)}\|_x^{1 - (1 / n)}\,\|\Fbar^{(k + n)}\|_x^{1 / n};
\]
we again bound the norm $\|\Fbar^{(k + n)}\|_x$ by~\eqref{factor3}.
Thus the argument of the $\limsup$ in~\eqref{limsupdiffR} can be bounded above by
\[
r(x)^{-1} \left[ \mbox{$\frac{1}{n}$} (- \ln \|\Fbar^{(k)}\|_x ) + 2 - \ln 4 + \ln n + \mbox{$\frac{1}{n}$} 
\ln a_{n, k} \right].
\]
By the right-tail \emph{lower} bound for $\|\Fbar^{(k)}\|_x$ in~\eqref{kright} (established in the preceding subsection), we know that
\[
- \ln \|\Fbar^{(k)}\|_x \leq x \ln x + (k \vee 1) x \ln \ln x + O(x) = (1 + o(1)) x \ln x.
\] 
Thus, letting $n \equiv n(x)$ satisfy $n(x) = \omega((x \log x) / r(x))$ and $n(x) = o(r(x))$, 
the claimed inequality follows.
\end{proof}

\begin{remark}
According to \refR{R:nonrigorous}, it is natural to conjecture that for every~$k$ we have
$\rho_k = - \infty$ and the $\limsup$ in~\eqref{limsupdiffR} with $r(x) \equiv x$ is a vanishing limit. 
\end{remark}

{\bf Acknowledgment.}\ We are grateful to an anonymous referee of an earlier extended abstract version of this paper for alerting us to the Landau--Kolmogorov inequality, which for upper bounds greatly simplified our proofs and improved our results.  We also thank Svante Janson and another anonymous referee for helpful comments.

\bibliography{bib_file}
\bibliographystyle{plain}

\end{document}